\newtheorem{thm}{Theorem}[section]
\newtheorem*{thm*}{Theorem}
\newtheorem{lem}[thm]{Lemma}
\newtheorem{cor}[thm]{Corollary}
\theoremstyle{definition}
\newtheorem{defn}[thm]{Definition}
\theoremstyle{remark}
\newtheorem{rem}[thm]{Remark}
\newtheorem{exa}[thm]{Example}
\numberwithin{equation}{section}
\newcommand{\Deltad}{{\Delta'}}
\newcommand{\Fp}{\mathbb F_p}
\newcommand{\Zp}{\mathbb Z/p}
\newcommand{\Ztwo}{\mathbb Z/2}
\DeclareMathOperator{\hind}{ind}
\DeclareMathOperator{\gfree}{g_{\rm free}}
\DeclareMathOperator{\Map}{Map}
\renewcommand{\int}{\mathop{\rm int}}
\renewcommand{\epsilon}{\varepsilon}
\begin{document}


\title[Configuration-like spaces and coincidences\dots]{Configuration-like spaces and coincidences of maps on orbits}

\author{R.N.~Karasev}
\address{
Roman Karasev, Dept. of Mathematics, Moscow Institute of Physics
and Technology, Institutskiy per. 9, Dolgoprudny, Russia 141700}
\email{r\_n\_karasev@mail.ru}
\urladdr{http://www.rkarasev.ru/en/}
\thanks{The research of R.N.~Karasev is supported by the Dynasty Foundation, the President's of Russian Federation grant MK-113.2010.1, the Russian Foundation for Basic Research grants 10-01-00096 and 10-01-00139, the Federal Program ``Scientific and scientific-pedagogical staff of innovative Russia'' 2009--2013}

\author{A.Yu.~Volovikov}
\address{
Alexey Volovikov, Department of Mathematics, University of Texas at Brownsville, 80
Fort Brown, Brownsville, TX, 78520, USA}
\address{
Alexey Volovikov, Department of Higher Mathematics, Moscow State Institute of
Radio-Engineering, Electronics and Automation (Technical
University), Pr. Vernadskogo 78, Moscow 117454, Russia}
\email{a\_volov@list.ru}
\thanks{The research of A.Yu.~Volovikov was partially supported by the Russian Foundation for Basic Research grant No. 08-01-00663 and by grant of Program of Support of Leading Scientific Schools HI\!I\!I-1562.2008.1}

\keywords{configuration space, coincidence, equivariant topology, the Krasnosel'skii--Schwarz genus}

\subjclass[2000]{55R80, 55M20, 55M30, 55M35, 57S17}



\begin{abstract}
In this paper we study the spaces of $q$-tuples of points in a Euclidean space, without $k$-wise coincidences (configuration-like spaces). A transitive group action by permuting these points is considered, and some new upper bounds on the genus (in the sense of Krasnosel'skii--Schwarz and Clapp--Puppe) for this action are given. Some theorems of Cohen--Lusk type for coincidence points of continuous maps to Euclidean spaces are deduced.
\end{abstract}

\maketitle

\section{Introduction}

In this paper we address the question of finding some sufficient conditions that guarantee that a continuous map $f : X\to \mathbb R^m$ has a certain number of self-coincidences on an orbit of a $G$-action on $X$, where $G$ is a finite group. The most famous result of this kind is the Borsuk--Ulam theorem~\cite{bor1933}, where $X$ is the $m$-dimensional sphere and $G=\Ztwo$ acts on $X$ by the antipodal action. Partial solutions of the Knaster problem in~\cite{mak1989,vol1992} provide another application.

In order to simplify the statements we need some definitions.

\begin{defn}
Let $G$ be a finite group, and $X$ be a $G$-space, i.e. a topological space with continuous left $G$-action. For a given continuous map $f :X\to Y$ we denote by $A(f, k)$ the coincidence set
\begin{multline*}
A(f, k) = \{x\in X : \exists\ \text{distinct}\ g_1,g_2,\ldots,g_k\in G\\ \text{such that}\ f(g_1x) = f(g_2x) = \dots = f(g_kx)\}.
\end{multline*}
\end{defn}

Generally, to deduce existence theorems for coincidences, we have to define the complexity of the action of $G$ on the space $X$. The following definition was made for $G=\Ztwo$ by Krasnosel'skii and Yang in~\cite{kr1952,kr1955A,yang1955,yang1955II}, and for arbitrary finite $G$ by Krasnosel'skii in~\cite{kr1955B}, as noted in~\cite{schw1957}. It is usually called the Krasnosel'skii--Schwarz genus.

\begin{defn}
\emph{The free genus} of a free $G$-space $X$ is the least number $n$ such that $X$ can be covered by $n$ open subsets $X_1,\ldots, X_n$ so that for every $i$ there exists a $G$-equivariant map $X_i\to G$. We denote the free genus by $\gfree(X)$.
\end{defn}

In this definition it makes sense to consider paracompact spaces $X$ only, see Section~\ref{free-genus} for more details. In this case, we can take closed sets in the definition, instead of open sets. Thus, in the sequel we consider paracompact $G$-spaces, unless otherwise stated. The following theorem was proved in~\cite{schw1966}. In this theorem and in the rest of the paper $p$ is a prime number, $\Zp$ is the cyclic group of order $p$, and $\Fp$ is the same group, considered as a field.

\begin{thm}
\label{coinc-schw}
Let $X$ be a free connected $\Zp$-space. Assume that $\gfree(X)>m(p-1)$. Then for any continuous map $f : X\to \mathbb R^m$
$$
\gfree (A(f,p))\ge \gfree(X)-m(p-1).
$$
In particular, the set $A(f, p)$ is non-empty. 
\end{thm}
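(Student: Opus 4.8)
The plan is to realize the coincidence set as the zero set of an equivariant map into a free representation, and then to exploit the subadditivity of the free genus.

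First I would encode the coincidence condition linearly. Writing $\Zp=\langle g\rangle$ and noting that in a group of order $p$ any choice of $p$ distinct elements $g_1,\dots,g_p$ exhausts $\Zp$, a point $x$ lies in $A(f,p)$ precisely when $f(x)=f(gx)=\dots=f(g^{p-1}x)$. So I would form the equivariant map
$$
F\colon X\to(\mathbb R^m)^p,\qquad F(x)=\bigl(f(x),f(gx),\dots,f(g^{p-1}x)\bigr),
$$
where $\Zp$ acts on $(\mathbb R^m)^p$ by cyclically permuting the $p$ blocks. Let $\Delta\subset(\mathbb R^m)^p$ be the diagonal and $W$ its orthogonal complement, so that $(\mathbb R^m)^p=\Delta\oplus W$ with $\dim W=m(p-1)$; here $W$ is the reduced regular representation tensored with $\mathbb R^m$, on which $\Zp$ acts \emph{freely away from the origin}. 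Composing with the equivariant orthogonal projection $\pi\colon(\mathbb R^m)^p\to W$ gives an equivariant map $h=\pi\circ F\colon X\to W$ with $A(f,p)=h^{-1}(0)$.

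It then suffices to prove the subadditive estimate $\gfree(X)\le\gfree(A(f,p))+m(p-1)$, since both the asserted inequality and (when $\gfree(X)>m(p-1)$) the non-emptiness of $A(f,p)$ follow immediately. To prove it I would exhibit an invariant open cover of $X$ of the required size. On the open invariant set $X\setminus A(f,p)$ the assignment $x\mapsto h(x)/\lVert h(x)\rVert$ is a well-defined equivariant map into the unit sphere $S(W)$, which is a free $\Zp$-sphere with $\gfree(S(W))\le\dim W=m(p-1)$. Pulling back a cover of $S(W)$ by $m(p-1)$ invariant open sets that admit equivariant maps to $\Zp$, and composing these pullbacks with the given maps, yields $m(p-1)$ invariant open subsets of $X$, each admitting an equivariant map to $\Zp$, whose union is all of $X\setminus A(f,p)$. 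On the other hand, by definition of $\gfree(A(f,p))$ the closed set $A(f,p)$ admits a cover by $\gfree(A(f,p))$ invariant sets carrying equivariant maps to $\Zp$.

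The main point requiring care is to enlarge this latter cover of $A(f,p)$ to a cover by sets \emph{open in $X$} while preserving the equivariant maps to $\Zp$; this is the neighborhood (continuity) property of the free genus for paracompact free $G$-spaces, where one uses local triviality of the free action together with paracompactness to extend each equivariant map over an invariant open neighborhood in $X$ (the relevant facts are collected in Section~\ref{free-genus}). Granting this, the enlarged sets together with the $m(p-1)$ sets constructed above form an open cover of $X=A(f,p)\cup(X\setminus A(f,p))$ by $\gfree(A(f,p))+m(p-1)$ invariant sets, each admitting an equivariant map to $\Zp$. Hence $\gfree(X)\le\gfree(A(f,p))+m(p-1)$, which rearranges to the statement. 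Finally, were $A(f,p)$ empty we would have $\gfree(A(f,p))=0$ and thus $\gfree(X)\le m(p-1)$, contradicting the hypothesis, so $A(f,p)\ne\varnothing$.
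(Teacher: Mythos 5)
Your proof is correct and takes essentially the same route as the paper: the paper (which cites this theorem from Schwarz but reproves and generalizes it) realizes $A(f,p)$ as $\widehat f^{-1}\bigl(\Delta_q^q(\mathbb R^m)\bigr)$ for the equivariant map $\widehat f\colon X\to(\mathbb R^m)^q$, then applies subadditivity and monotonicity of the genus (Theorem~\ref{inverse-image-genus}) together with the dimension upper bound on the diagonal complement, which is equivariantly homotopy equivalent to the free sphere $S(W)$ of dimension $m(p-1)-1$. Your explicit cover construction, including the extension of covers of the closed set $A(f,p)$ to open invariant neighborhoods, merely unpacks the subadditivity and closed-versus-open properties of $\gfree$ that the paper collects in Section~\ref{free-genus}.
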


In~\cite{colu1976} the partial coincidences on an orbit were considered, and the following theorem was proved.

\begin{thm}
\label{coinc-colu}
Let $X$ be a free $\Zp$-space. Let $(p+1)/2\le k \le p$ or $k=2$. Suppose that $X$ is connected and acyclic over the field $\Fp$ in dimensions less than $(m-1)(p-1)+k-1$. Then for any continuous map $f : X\to \mathbb R^m$ we have $A(f,k)\not=\emptyset$.
\end{thm}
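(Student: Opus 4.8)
The plan is to argue by contradiction, turning the coincidence problem into a question about the existence of an equivariant map into the complement of the $k$-equal subspace arrangement, and then to compare cohomological indices. Suppose $A(f,k)=\emptyset$. Writing $\Zp=\langle g\rangle$, I would form the equivariant map $F\colon X\to(\mathbb R^m)^p$, $F(x)=\bigl(f(x),f(gx),\dots,f(g^{p-1}x)\bigr)$, where $\Zp$ permutes the $p$ blocks cyclically. Let $\Delta_k\subset(\mathbb R^m)^p$ denote the set of tuples in which at least $k$ of the coordinates coincide (the $k$-equal arrangement), a $\Zp$-invariant cone containing the thin diagonal. By definition $A(f,k)=F^{-1}(\Delta_k)$, so $F$ misses $\Delta_k$. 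Splitting $(\mathbb R^m)^p=D\oplus W$ with $D$ the thin diagonal and $W=\{\,y:\sum y_i=0\,\}$ its orthogonal complement, a free $\Zp$-representation with $\dim W=m(p-1)$, and using that $\Delta_k$ is invariant under diagonal translation, the orthogonal projection $\bar F\colon X\to W$ still avoids $\bar\Delta_k:=\Delta_k\cap W$. Thus a $\Zp$-map $X\to Y:=W\setminus\bar\Delta_k$ exists.

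Next I would invoke the $\Fp$-cohomological index $\hind$ (in the sense of Volovikov and Fadell--Husseini), which is monotone under $\Zp$-maps and satisfies: a free $\Zp$-space that is $\Fp$-acyclic in all dimensions below $N$ has $\hind\ge N$. The hypothesis gives $\hind X\ge N$ with $N=(m-1)(p-1)+k-1$, while the map constructed above yields $\hind X\le\hind Y$, hence $\hind Y\ge N$. The whole theorem therefore reduces to proving the upper bound $\hind Y\le N-1$, which contradicts this inequality and forces $A(f,k)\neq\emptyset$.

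For the crux, the upper bound on $\hind Y$, note that $Y$ is an open $\Zp$-subset of $W\setminus\{0\}$, so radial retraction gives $Y\simeq S(W)\setminus\tilde\Delta_k$ with $\tilde\Delta_k=\bar\Delta_k\cap S(W)$. It suffices to show that $Y$ has $\Fp$-cohomological dimension at most $N-1$; by Alexander duality in $S(W)$ (here $\dim S(W)=m(p-1)-1$) this is equivalent to the statement that $\tilde\Delta_k$ has vanishing reduced $\Fp$-homology below degree $(p-1)-k$. For $k=p$ this is immediate, since $\bar\Delta_p=\{0\}$ and $Y=S(W)$, of cohomological dimension $m(p-1)-1=N-1$. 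The essential use of the hypothesis $2k>p$ is combinatorial: no two disjoint $k$-subsets of $\{1,\dots,p\}$ exist, so every point of $\bar\Delta_k$ possesses a \emph{unique} maximal block of at least $k$ equal coordinates, and every multiple intersection of the defining subspaces $L_S=\{\,y_i\ (i\in S)\ \text{equal}\,\}$ is again of the form $L_U$. This collapses the intersection poset of the arrangement to an interval-type poset, from which I would compute the homology of $\tilde\Delta_k$---via the Goresky--MacPherson formula, or a Mayer--Vietoris spectral sequence over the nerve of $\{\tilde L_S\}$---to obtain the required vanishing range. The classical case $k=2$, where $Y$ is the centered configuration space of $p$ distinct points in $\mathbb R^m$, of cohomological dimension $(m-1)(p-1)=N-1$, I would handle separately from its known cell structure.

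The hard part is exactly this arrangement computation yielding $\hind Y\le N-1$. The naive general-position estimate gives connectivity only of order $m(k-1)$, far short of what is needed once $k<p$; the extra gain comes entirely from the subspace-arrangement structure, so a Goresky--MacPherson / nerve argument seems unavoidable. Two technical points demand care. First, the pieces $\tilde L_S$ are spheres rather than contractible sets, so the plain nerve lemma does not apply and one must track the sphere coefficients through a spectral sequence. Second, the index bound has to be established $\Zp$-equivariantly, in $H^*_{\Zp}(\,\cdot\,;\Fp)$, not merely at the level of ordinary cohomological dimension. Finally one must verify that the arithmetic conspires to give precisely $N-1$ throughout the range $(p+1)/2\le k\le p$, which is where the hypothesis on $k$ is genuinely used.
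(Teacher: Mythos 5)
Your route is genuinely different from the paper's, so let me first compare. The paper proves this statement (in the stronger form of Corollary~\ref{coinc-zp}, with no restriction on $k$) without computing any cohomology of configuration-like spaces: the key upper bound $g_G(V(m,p,k))\le(m-1)(p-1)+k-1$ is obtained by elementary geometry. Namely, $W(p,k)$ is covered by $k-1$ invariant open sets, the $l$-th consisting of tuples whose top $l$ values are strictly separated from the rest, each mapping equivariantly to the finite fixed point free $G$-set $\binom{G}{l}$ (Theorem~\ref{max-conf-space-genus}); the splitting $\mathbb R^m=\mathbb R\times\mathbb R^{m-1}$ plus subadditivity handles general $m$ (Theorem~\ref{conf-space-genus}); and the coincidence statement is subadditivity applied to $X=A(f,k)\cup\widehat f^{-1}(V(m,p,k))$ (Theorem~\ref{inverse-image-genus}), with the acyclicity hypothesis entering only through the cohomology lower bound $\gfree(X)\ge N+1$, where $N=(m-1)(p-1)+k-1$. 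This buys a lot: the hypothesis $(p+1)/2\le k\le p$ or $k=2$ disappears, and one gets a genus bound on $A(f,k)$ rather than mere nonemptiness. Your approach is intrinsically limited to these $k$: when $\lfloor p/k\rfloor\ge 2$ the Goresky--MacPherson formula places nonzero cohomology of the $k$-equal complement in degree $(m-1)(p-1)+\lfloor p/k\rfloor(k-2)>N-1$, so no bound via cohomological dimension can go further, whereas the covering argument is insensitive to this.

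As a proof, your text has two genuine gaps, both at the crux. First, the reduction ``it suffices to show that $Y$ has $\Fp$-cohomological dimension at most $N-1$'' is not a formality, and you only flag it as a point demanding care. What is needed is: for a \emph{free} $\Zp$-space $Y$ of finite covering dimension with $H^j(Y;\Fp)=0$ for $j\ge N$, one has $H^j(Y/\Zp;\Fp)=0$ for $j\ge N$, and hence $\hind(Y)\le N-1$. This is true, but it is a lemma of Conner--Floyd type requiring an argument: $Y\times_{\Zp}S^1$ is a $Y$-bundle over $S^1$, so its $\Fp$-cohomology vanishes in degrees $\ge N+1$; the Gysin sequence of the circle bundle $Y\times_{\Zp}S^1\to Y/\Zp$ then shows that cup product with its Euler class is an isomorphism $H^j(Y/\Zp;\Fp)\to H^{j+2}(Y/\Zp;\Fp)$ for $j\ge N$, and finite dimensionality forces all these groups to vanish (for $p=2$ this is the classical argument with $w_1$). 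Without this step, monotonicity of $\hind$ gives you nothing, since ordinary cohomology vanishing does not by itself control the Borel construction. Second, the arrangement computation --- which you yourself call ``the hard part'' --- is left as a plan. It does work out: for $2k>p$ the intersection poset is the Boolean lattice truncated below rank $k$, each lower interval is shellable and hence a wedge of spheres of dimension $b-k-1$ for a block of size $b$, and Goresky--MacPherson (applied directly to the complement in $(\mathbb R^m)^p$, which avoids your worry about spherical pieces in a nerve) puts all reduced cohomology in degrees $(m-1)(b-1)+k-2\le N-1$; this is the computation of Bj\"orner and Welker. So the plan is sound and completable, but as written the central inequality $\hind(Y)\le N-1$ is asserted, not proved.
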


It was conjectured in~\cite{colu1976} that the restrictions on $k$ are not necessary. A step in this direction was made in~\cite{bol2001}, where the case $k=\dfrac{p-1}{2}$ was considered, in~\cite{vol2005,vol2007} this conjecture was proved for $k\neq 3$. Here we assume $p$ to be odd, because the case $p=2$ is already covered by Theorem~\ref{coinc-colu}.

In this paper we establish upper and lower bounds on the genus $\gfree$ of certain configuration-like spaces in Section~\ref{conf-genus}. Among the consequences is the following result that incorporates both cited theorems without any restrictions on $k$ in Theorem~\ref{coinc-colu}.

\begin{cor}
\label{coinc-zp}
Let $X$ be a free $G$-space, where $G$ is a cyclic group of order $p$. Assume that $\gfree(X)>(m-1)(p-1)+k-1$. Then for any continuous map $f : X\to \mathbb R^m$
$$
\gfree(A(f,k))\ge \gfree(X)-(m-1)(p-1)-k+1.
$$
\end{cor}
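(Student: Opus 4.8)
The plan is to reduce the assertion to an upper bound on the free genus of an auxiliary configuration-like space and then to run the standard calculus of $\gfree$.

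First I would encode the $k$-fold coincidences of $f$ on a single $G$-orbit as one equivariant map. Let $W=\Map(G,\mathbb R^m)$ be the space of $G$-indexed families of points in $\mathbb R^m$, equipped with the $G$-action $(h\cdot\varphi)(g)=\varphi(gh)$, and define $\Phi\colon X\to W$ by $\Phi(x)(g)=f(gx)$. A one-line check gives $\Phi(hx)=h\cdot\Phi(x)$, so $\Phi$ is $G$-equivariant. Let $M\subseteq W$ be the configuration-like subspace consisting of those $\varphi$ for which no value is attained on $k$ or more indices, i.e.\ there is no $k$-element subset $S\subseteq G$ with $\varphi$ constant on $S$. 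By the very definition of $A(f,k)$ we have $A(f,k)=\Phi^{-1}(W\setminus M)$, so $\Phi$ restricts to an equivariant map $X\setminus A(f,k)\to M$. The action of $\Zp$ on $M$ is free: a fixed point of a nontrivial $h$ would satisfy $\varphi(gh)=\varphi(g)$ for all $g$, and since such $h$ generates $\Zp$ this forces $\varphi$ to be constant, whence its value is attained on all $p$ indices; as $p\ge k$ this means $\varphi\notin M$.

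Next I would feed in the genus bound for the configuration-like space. The upper bound established in Section~\ref{conf-genus} yields $\gfree(M)\le (m-1)(p-1)+k-1$; as a consistency check, for $k=p$ this reads $\gfree(M)\le m(p-1)$, in agreement with Theorem~\ref{coinc-schw}. By monotonicity of the genus under equivariant maps applied to $\Phi$,
$$\gfree\bigl(X\setminus A(f,k)\bigr)\le \gfree(M)\le (m-1)(p-1)+k-1.$$
Finally I would invoke subadditivity together with the continuity of $\gfree$ on closed invariant sets. Since $A(f,k)=\Phi^{-1}(W\setminus M)$ is closed and invariant in the free paracompact space $X$, its complement is open and invariant; covering $X$ by $X\setminus A(f,k)$ and an invariant open neighborhood $N$ of $A(f,k)$ and using $\gfree(A(f,k))=\inf_N\gfree(N)$ (the continuity property recorded in Section~\ref{free-genus}) gives $\gfree(X)\le \gfree(X\setminus A(f,k))+\gfree(A(f,k))$. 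Rearranging and substituting the bound above yields $\gfree(A(f,k))\ge \gfree(X)-(m-1)(p-1)-k+1$, and the hypothesis $\gfree(X)>(m-1)(p-1)+k-1$ makes the right-hand side positive, so in particular $A(f,k)\neq\emptyset$.

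The genuine content, and hence the main obstacle, is the configuration-space estimate $\gfree(M)\le (m-1)(p-1)+k-1$ supplied by Section~\ref{conf-genus}; the reduction and the final genus calculus are routine once the freeness of the action on $M$ and the standard properties of $\gfree$ for closed invariant subsets of paracompact free $G$-spaces are in hand.
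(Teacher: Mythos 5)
Your argument is correct, and it runs on the same engine as the paper's: the equivariant orbit map (your $\Phi$ is the paper's $\widehat f$ up to the substitution $g\mapsto g^{-1}$, harmless for abelian $G$), an upper bound on the genus of a configuration-like space from Section~\ref{conf-genus}, and the monotonicity-plus-subadditivity calculus of Theorem~\ref{inverse-image-genus}. The routing differs slightly. The paper obtains Corollary~\ref{coinc-zp} as a consequence of the stronger Theorem~\ref{coinc-fpf}: it splits $\mathbb R^m=\mathbb R\times\mathbb R^{m-1}$ and bounds the genus of the smaller set $A'(f_1,k)\cap A(h,q)\subseteq A(f,k)$ using $g_G(W(q,k))\le k-1$ and $g_G(V(m-1,q,q))\le (m-1)(q-1)$ separately; that refinement (maximum coincidence in the first coordinate) is what the Knaster-type Corollary~\ref{knaster1} requires. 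You instead map $X\setminus A(f,k)$ directly into $V(m,p,k)$ and quote $g_G(V(m,p,k))\le(m-1)(p-1)+k-1$ from Theorem~\ref{conf-space-genus}. Since that theorem is itself proved by exactly the paper's splitting, the mathematical content is identical; yours is the shorter path to this particular corollary, at the cost of not yielding the $A'$ refinement. One bookkeeping point: the ``continuity property'' $\gfree(A(f,k))=\inf_N\gfree(N)$ you invoke is not among the properties recorded in Section~\ref{free-genus} (the continuity remark there concerns \v Cech cohomology classes, not the genus); it is also unnecessary, because the subadditivity property as stated allows one closed and one open $G$-invariant piece, so it applies directly to $X=(X\setminus A(f,k))\cup A(f,k)$, exactly as in the paper's proof of Theorem~\ref{inverse-image-genus}.
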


Note that the properties of $\gfree$ in Section~\ref{free-genus} imply that 
$$
\dim A(f,k)\ge \gfree(X)-(m-1)(p-1)-k,
$$
and therefore Corollary~\ref{coinc-zp} confirms the conjecture from~\cite{colu1976}.

In fact, Corollary~\ref{coinc-zp} follows from a more general statement; to formulate it we need some more definitions.

\begin{defn}
Let $G$ be a finite group, $X$ be a $G$-space. For a given continuous function $f :X\to \mathbb R$ denote by $A'(f, k)$ the \emph{maximum coincidence} set
\begin{multline*}
A'(f, k) = \{x\in X : \exists\ \text{distinct}\ g_1,g_2,\ldots,g_k\in G\\\text{such that}\ f(g_1x) = f(g_2x) = \dots = f(g_kx) = c\\ \text{and}\ \forall g\in G\ f(gx)\le c\}.
\end{multline*}
It is clear that $A'(f, k)\subseteq A(f, k)$.
\end{defn} 

\begin{defn}
If the group $G$ acts on $X$ without $G$-fixed points, we call $X$ \emph{fixed point free}.
\end{defn}

For fixed point free $G$-spaces a notion similar to the Krasnosel'skii--Schwarz genus $\gfree$ can be defined; see Section~\ref{fpf-genus} for the definition of $g_G(X)$ and its properties. For a cyclic group of prime order these two notions coincide. From the upper bounds on the genus of certain configuration-like spaces in Section~\ref{conf-genus}, we deduce the following result on coincidences.

\begin{thm}
\label{coinc-fpf}
Let $G$ be a finite group, $m\ge 1$, $2\le k\le |G|$ be integers. Consider a fixed point free $G$-space $X$ with $g_G(X) > (|G|-1)(m-1) + k - 1$ and two continuous maps $f_1 : X\to \mathbb R$ and $h : X\to \mathbb R^{m-1}$. Then 
\begin{multline*}
g_G(A(f_1\oplus h, k))\ge g_G(A'(f_1,k)\cap A(h,q)) \ge\\ \ge g_G(X) - (|G|-1)(m-1) - k + 1.
\end{multline*}
\end{thm}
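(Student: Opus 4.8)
The first inequality is purely formal, and I would dispose of it immediately via a set inclusion. If $x\in A'(f_1,k)\cap A(h,q)$ with $q=|G|$, then there are distinct $g_1,\dots,g_k\in G$ with $f_1(g_1x)=\dots=f_1(g_kx)=c=\max_{g\in G}f_1(gx)$, while membership in $A(h,|G|)$ forces $h$ to be constant on the whole orbit, so $h(g_1x)=\dots=h(g_kx)$. Hence $(f_1\oplus h)(g_1x)=\dots=(f_1\oplus h)(g_kx)$, i.e. $x\in A(f_1\oplus h,k)$. Thus $A'(f_1,k)\cap A(h,q)\subseteq A(f_1\oplus h,k)$, and monotonicity of $g_G$ (Section~\ref{fpf-genus}) yields the left inequality. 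It then remains to bound from below the genus of $U:=A'(f_1,k)\cap A(h,q)$.

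The plan for the main inequality is a subadditivity argument. I would write $V:=X\setminus U=(X\setminus A'(f_1,k))\cup(X\setminus A(h,q))=:B_{f_1}\cup B_h$. Since $U$ is a closed $G$-invariant set with open $G$-invariant complement $V$, the cover $X=U\cup V$ and subadditivity of $g_G$ give $g_G(X)\le g_G(U)+g_G(V)\le g_G(U)+g_G(B_{f_1})+g_G(B_h)$, applying subadditivity once more to $V=B_{f_1}\cup B_h$ (after the usual open thickening permitted for paracompact spaces). The problem then splits into the two estimates $g_G(B_h)\le(|G|-1)(m-1)$ and $g_G(B_{f_1})\le k-1$, whose sum is exactly the quantity subtracted from $g_G(X)$.

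For $B_h$ I would use the sphere of a reduced representation. Consider the $G$-map $\Psi\colon X\to(\mathbb R^{m-1})^G$, $\Psi(x)=(h(gx))_{g\in G}$, with $G$ permuting coordinates. On $B_h$ the values $h(gx)$ are not all equal, so $\Psi$ avoids the diagonal $\Delta$; projecting to the reduced representation $\bar V:=(\mathbb R^{m-1})^G/\Delta$ and then radially produces a $G$-map $B_h\to S(\bar V)$. The subspace of $(\mathbb R^{m-1})^G$ fixed by $G$ is exactly $\Delta$, so $\bar V$ is fixed point free and $S(\bar V)$ is a fixed point free sphere. By the properties of $g_G$ from Section~\ref{fpf-genus}, monotonicity and the value of $g_G$ on such a sphere give $g_G(B_h)\le g_G(S(\bar V))=\dim\bar V=(m-1)(|G|-1)$.

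The estimate for $B_{f_1}$ is where the real work, and the main obstacle, lies. Here $B_{f_1}$ consists of the points $x$ at which the maximum $c(x)=\max_g f_1(gx)$ is attained by at most $k-1$ elements of $G$. The plan is to build a $G$-map from $B_{f_1}$ into the configuration-like model of Section~\ref{conf-genus} — concretely, into the $(k-2)$-dimensional subcomplex of the simplex on the vertex set $G$ with its permutation action, for which Section~\ref{conf-genus} supplies the bound $g_G\le k-1$. That this action is fixed point free (so that $g_G$ is defined) follows because $G$ acts transitively on the $|G|$ vertices, so the only $G$-invariant probability measure is uniform, supported on all of $G$; as $k-1<|G|$, this point does not lie in the $(k-2)$-skeleton. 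The map itself should send $x$ to normalized weights concentrated on the elements $g$ with $f_1(gx)$ close to $c(x)$. The hard part is precisely that the assignment $x\mapsto\{g:f_1(gx)=c(x)\}$ is discontinuous, so no naive formula produces a continuous equivariant map; this is exactly where the \emph{maximum} coincidence set $A'$ (rather than the plain coincidence set) and the fine structure of the configuration-like space of Section~\ref{conf-genus} are needed to manufacture a genuine continuous $G$-map. Granting this map together with the genus bound $g_G\le k-1$, monotonicity gives $g_G(B_{f_1})\le k-1$, and combining the two estimates completes the argument.
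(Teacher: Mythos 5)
Your overall skeleton is the paper's own proof in disguise: the paper packages your subadditivity argument as Theorems~\ref{inverse-image-genus} and~\ref{inverse-image-intersection-genus}, and your maps $\Psi$ and ``weights'' correspond to the equivariant map $\widehat f(x)(g)=f(g^{-1}x)$ of Lemma~\ref{coincidence-redifinition}. Your treatment of $B_h$ is correct and is exactly the paper's bound $g_G(V(m-1,q,q))\le (m-1)(q-1)$ (one caveat: your claim $g_G(S(\bar V))=\dim\bar V$ is not known for a general finite group $G$ --- the paper explicitly says $g_G(V(m,q,q))$ is unknown in general --- but only the inequality $\le$, which follows from the dimension upper bound, is needed). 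Your first inequality via the inclusion $A'(f_1,k)\cap A(h,q)\subseteq A(f_1\oplus h,k)$ is also fine.

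The genuine gap is the estimate $g_G(B_{f_1})\le k-1$, which you explicitly leave unproved (``Granting this map \dots completes the argument''). This is not a detail: it is the central geometric content of the theorem, and in the paper it is Theorem~\ref{max-conf-space-genus}, $g_G(W(q,k))\le k-1$, where $W(q,k)=\mathbb R^q\setminus\Deltad_q^k(\mathbb R)$. Once that theorem is in hand there is nothing left to ``manufacture'': $B_{f_1}=X\setminus A'(f_1,k)=\widehat{f_1}^{-1}(W(q,k))$, the map $\widehat{f_1}(x)=(f_1(g^{-1}x))_{g\in G}$ is automatically continuous and equivariant, and monotonicity gives $g_G(B_{f_1})\le g_G(W(q,k))\le k-1$. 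Moreover, your stated obstacle --- that the argmax assignment $x\mapsto\{g: f_1(gx)=c(x)\}$ is discontinuous, so no single continuous equivariant map to a discrete or low-dimensional model exists --- is a red herring: the definition of $g_G$ requires only a cover by $k-1$ open sets each admitting a $G$-map to $D_G$, not one global map. That is precisely how the paper proves Theorem~\ref{max-conf-space-genus}: $W(q,k)$ is covered by the open invariant sets $V_1,\dots,V_{k-1}$, where $V_m$ consists of those $\phi:G\to\mathbb R$ whose top $m$ values are strictly separated from the remaining ones; on each $V_m$ the assignment of the top-$m$ set \emph{is} locally constant, giving an equivariant map to the discrete fixed point free $G$-set $\binom{G}{m}$, hence $g_G(V_m)=1$, and subadditivity finishes. (Alternatively, a single continuous equivariant map does exist if one insists: send $\phi$ to the normalized weights $w_g=\bigl(\phi(g)-t_k(\phi)\bigr)_+$, where $t_k(\phi)$ is the $k$-th largest value of $\phi$ --- a continuous function of $\phi$; this lands in the $(k-2)$-skeleton of the simplex on $G$, which has genus at most $k-1$ by the dimension bound. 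But some such concrete device must actually be written down; without it your proof of the main inequality is incomplete at exactly the step carrying all the difficulty.)
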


Here we denote by $f_1\oplus h$ the map to $\mathbb R\oplus \mathbb R^{m-1}$ with components $(f_1, h)$. This result works well at least for $p$-tori (groups of the form $(\Zp)^k$), see Sections~\ref{fpf-genus} and \ref{manifolds} for details. Since any nontrivial finite group has a subgroup of this kind (e.g. a cyclic subgroup of prime order), we can sometimes replace $G$ by an appropriate $p$-torus subgroup (if this subgroup acts without fixed points on $X$).

In the case when $X$ is a manifold, the following lower bound for $\dim A(f,k)$ was found in~\cite{colu1976}: under the conditions of Theorem~\ref{coinc-colu}, if $X$ is an $\Fp$-orientable connected $N$-dimensional manifold, acyclic over the field $\Fp$ in dimensions less than $(m-1)(p-1)+k-1$, then $\dim A(f,k)\ge N-(m-1)(p-1)-k+1$. In Section~\ref{manifolds} we prove a similar result for $p$-tori.

Another corollary is a Knaster-type result, similar to results of~\cite{karvol2009}, see also~\cite{kna1947} for the formulation of Knaster's conjecture.

\begin{defn}
Denote by $I[G]\subset\mathbb R[G]$ the $G$-invariant subspace in the group ring $\mathbb R[G]$ consisting of
$$
\sum_{g\in G} \alpha_g g,\quad\text{with}\quad \sum_{g\in G} \alpha_g = 0.
$$
\end{defn}

\begin{cor}
\label{knaster1}
Consider a $p$-torus $G=(\Zp)^n$ for odd $p$ and set $q=|G|$. Let $S^{q-2}$ be the unit sphere of $I[G]$ with respect to some $G$-invariant inner product. Suppose $f : S^{q-2}\to \mathbb R$ is a continuous function, and $x\in S^{q-2}$ is some point. Then there exists a rotation $\rho$ of $S^{q-2}$ with positive determinant, such that
$$
\forall g\in G\setminus\{e\}\ f(\rho (gx)) = c,\ \text{and}\ f(\rho(x)) \le c.
$$
\end{cor}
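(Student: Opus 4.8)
The plan is to apply Theorem~\ref{coinc-fpf} to the rotation group $X = SO(q-1)$, regarded as a fixed point free $G$-space, with the parameters $m=1$ and $k = q-1$. First I would realize $G$ inside $SO(q-1)$. The group acts on $\mathbb{R}[G]$ by left translation, permuting the basis $\{g\}_{g\in G}$; this action preserves $I[G]$ and the chosen $G$-invariant inner product, so it yields $G \hookrightarrow O(I[G]) = O(q-1)$. Because $p$ is odd, every nonidentity element has order $p$ and acts on the basis as a product of $p$-cycles, hence as an \emph{even} permutation of determinant $+1$; therefore in fact $G\subset SO(q-1)$. I then let $G$ act on $X=SO(q-1)$ by right translation, $g\cdot\rho = \rho g^{-1}$. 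This action is free, so $X$ is fixed point free (indeed free), and it is exactly the positive-determinant rotations that the statement asks for.

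Next I would set up the test data: put $f_1:X\to\mathbb{R}$, $f_1(\rho)=f(\rho x)$, and take $h$ to be the unique map to $\mathbb{R}^{m-1}=\mathbb{R}^0$, so that $A(h,\cdot)=X$. For this action the orbit values are $\{f_1(g\cdot\rho)\}_{g\in G}=\{f(\rho g x)\}_{g\in G}$, i.e. the values of $f$ on the orbit $Gx$ rotated by $\rho$. Granting the genus bound $g_G(SO(q-1))\ge q-1$ (the crux, discussed below), the hypothesis $g_G(X) > (|G|-1)(m-1)+k-1 = q-2$ of Theorem~\ref{coinc-fpf} holds with $k=q-1$, and its conclusion gives $g_G\bigl(A'(f_1,q-1)\bigr)\ge g_G(X)-(q-1)+1\ge 1$, whence $A'(f_1,q-1)\ne\emptyset$. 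Unwinding the definition of $A'$, there is a rotation $\rho_0\in SO(q-1)$ and a value $c$ with $f(\rho_0 g x)\le c$ for all $g\in G$ and $f(\rho_0 g x)=c$ for every $g$ outside a single exceptional element $g_0$ (if the maximum is attained on all of $Gx$, take $g_0$ arbitrary).

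A short symmetrization then finishes the proof. Replacing $\rho_0$ by $\rho=\rho_0 g_0$, which again lies in $SO(q-1)$ since $g_0\in G\subset SO(q-1)$ (so $\det\rho>0$), relabels the orbit: one has $f(\rho x)=f(\rho_0 g_0 x)\le c$, while for $g\ne e$ the element $g_0 g$ runs through $G\setminus\{g_0\}$, on which the maximum $c$ is attained, so $f(\rho g x)=f(\rho_0 g_0 g x)=c$. This is precisely the assertion of Corollary~\ref{knaster1}.

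The main obstacle is the genus estimate $g_G(SO(q-1))\ge q-1$. The evaluation map $\rho\mapsto \rho^{-1}x_0$ is a $G$-map $SO(q-1)\to S^{q-2}=S(I[G])$, so by monotonicity of $g_G$ one gets the upper bound $g_G(SO(q-1))\le g_G(S(I[G]))=q-1$, where the value $q-1$ comes from $I[G]$ being a fixed point free representation ($I[G]^G=0$) together with the representation-sphere genus computation of Section~\ref{fpf-genus}; the genuinely nontrivial inequality is the matching lower bound. I expect to obtain it either by producing a $G$-map $S(I[G])\to SO(q-1)$ (so that monotonicity yields $g_G(SO(q-1))\ge g_G(S(I[G]))$), or, more robustly, by a cohomological index argument over $\Fp$ using the total Euler class of the bundle over $SO(q-1)/G$ associated with $I[G]$: since $I[G]$ splits into $(q-1)/2$ two-dimensional $\Fp$-irreducibles, this class lies in degree $q-1$ and is nonzero because $I[G]$ is fixed point free. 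This is the step that truly invokes the $p$-torus machinery of Sections~\ref{fpf-genus} and~\ref{conf-genus}, and it is where the hypothesis that $p$ is odd (via $G\subset SO$) is essential.
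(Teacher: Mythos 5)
Your reduction is exactly the paper's: the same space $SO(q-1)$ with the right-translation $G$-action, the same induced function $\rho\mapsto f(\rho x)$, and the same application of Theorem~\ref{coinc-fpf} with $m=1$, $k=q-1$; your final translation step $\rho=\rho_0 g_0$ (legitimate since $p$ odd makes $G$ act by even permutations, so $G\subset SO(q-1)$) correctly supplies the unwinding of $A'(\tilde f,q-1)\neq\emptyset$ that the paper leaves implicit. The entire content of the corollary, however, is the lower bound $g_G(SO(q-1))\ge q-1$, and there your proposal has a genuine gap: you do not prove it, and neither of the two arguments you sketch for it can work.

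First, a $G$-map $S(I[G])\to SO(q-1)$ does not exist once $n\ge 2$. Equivariant maps can only enlarge stabilizers, $G_x\subseteq G_{f(x)}$, and the right-translation action on $SO(q-1)$ is free, so such a map would force $S(I[G])$ to be a free $G$-space; it is not, since for a nontrivial proper subgroup $H<G$ the vector $\sum_{h\in H}h-\frac{1}{p}\sum_{g\in G}g$ lies in $I[G]\setminus\{0\}$ and is fixed by $H$. Second, the Euler-class argument fails because the bundle you propose to use is trivial: the $G$-representation $I[G]$ is the restriction of the standard representation of $SO(q-1)$ itself, so the associated bundle $SO(q-1)\times_G I[G]\to SO(q-1)/G$ is trivialized by $[\rho,v]\mapsto(\rho G,\rho v)$, and all of its characteristic classes vanish. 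Fixed-point-freeness of $I[G]$ does give a nonzero Euler class in $H^{q-1}(BG,\Fp)$, but says nothing about its image in $H^{q-1}(SO(q-1)/G,\Fp)$ --- and that image is zero. This is precisely why the bound is nontrivial: the paper gets it by citing $g_G(X)\ge i_G(X)$ for fixed point free $G$-spaces (\cite[Proposition~4.7]{vol2005}) together with the computation $i_G(SO(q-1))=q-1$ of \cite{karvol2009}, which is a genuine spectral-sequence argument for the Borel fibration of $SO(q-1)$ and is not recoverable by either of your shortcuts.
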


To prove this corollary we use another numerical invariant of $G$-action, defined in~\cite{vol2000}. Here we give the definition for connected spaces only.

\begin{defn}
Let $G=(\Zp)^n$ be a $p$-torus, and $X$ be a connected $G$-space. Consider the Leray--Serre spectral sequence with 
$$
E_2^{*,*} = H^*(BG, H^*(X, \Fp)),
$$
converging to the equivariant cohomology of $X$ in the sense of Borel $H_G^*(X, \Fp)$. Define $i_G(X)$ to be the minimum $r\ge 2$ such that the image of $d_r$ in the bottom row $E_r^{*,0}$ is nonzero.
\end{defn}

\begin{proof}[Proof of Corollary~\ref{knaster1}]
This corollary follows from Theorem~\ref{coinc-fpf}. Consider the space $SO(q-1)$ along with the action of $G$ by right multiplication by $g^{-1}$ ($g\in G$), this is a left $G$-action. The function $f$ induces the function on $SO(q-1)$ by the formula
$$
\tilde f : \rho \mapsto f(\rho x).
$$ 
We have to prove that $A'(\tilde f, q-1)\neq \emptyset$. By Theorem~\ref{coinc-fpf} (case $m=1$) it suffices to show that $g_G(SO(q-1))\ge q-1$ for the this action of $G$ on $SO(q-1)$. 

It was shown in~\cite[Proposition~4.7]{vol2005} that $g_G(X)\ge i_G(X)$ for fixed point free $G$-spaces $X$, and the value $i_G(SO(q-1)) = q-1$ was found in~\cite{karvol2009}.
\end{proof}

In Section~\ref{conf-genus} we shall prove a Corollary~\ref{lower-bound-pconf-manifold} that estimates the genus of the classical configuration space of an $\Fp$-oriented $m$-dimensional manifold $M$ from below by the number $(m-1)(p-1) + 2$. Here $p$ is a prime, the configuration space consists of $p$-tuples of distinct points in $M$, and the genus is taken with respect to the cyclic permutation action of $\Zp$. Note that for such groups the free genus and the fixed point free genus coincide. This result is also valid for the free genus with respect to the action of the full permutation group $\Sigma_p$, because this group acts freely on the configuration space (see Property~\ref{free-subgr} of the free genus in Section~\ref{free-genus}). In~\cite[Theorem~5.2]{bgrt2010} it is shown that for the case $M=S^m$ this bound is optimal, because the space of configurations of $n$ distinct points in $S^m$ ($n$ does not need to be a prime) is $\Sigma_n$-equivariantly homotopy equivalent to a polyhedron of dimension $(m-1)(n-1)+1$. This means that the bound for $\Zp$-actions is also optimal for spheres.

The rest of the paper is organized as follows. In Sections~\ref{free-genus} and \ref{fpf-genus} we give the definitions and properties of the free genus and the fixed point free genus. In Section~\ref{conf-spaces} we define different configuration-like spaces. In Section~\ref{conf-genus} we give lower and upper bounds for the genus of configuration-like spaces; these are the core results of the paper. In Section~\ref{coinc-proof-sec} we deduce the coincidence theorems. In Section~\ref{manifolds} we improve the coincidence theorems in case the domain space is a manifold.

In this paper we generally use purely geometric methods, based on the subadditivity, the dimension upper bound, and other properties of the genus. The reader may compare this approach with the lower bounds for the genus (actually for the number $i_G$) in~\cite{kar2009}, made with computations in cohomology and spectral sequences.

The authors thank Peter~Landweber for numerous remarks and corrections, and Jes\'us Gonz\'alez for pointing out the upper bounds for the homotopy dimension of the configuration spaces of a sphere.

\section{Genus of a free action}
\label{free-genus}

It is well known (e.g. see~\cite{bart1993}) that for paracompact spaces the definition of $\gfree$ can be reformulated as follows, using a partition of unity argument.

\begin{defn}
\emph{The free genus} of a free $G$-space $X$ is the least number $n$ such that $X$ can be $G$-mapped to some $n$-fold join $G*\dots *G$.
\end{defn}

Here we list the properties of the free genus, mainly from~\cite{schw1957}. We use the notation $\gfree(X, G)$, when it is needed to indicate explicitly the acting group.

\begin{enumerate}
\item (Monotonicity)
If there is a $G$-map $f: X\to Y$, then $ \gfree(X) \le \gfree(Y)$;

\item (Subadditivity)
Let $X=A\cup B$, where $A$, $B$ are closed or open $G$-invariant subspaces. Then $\gfree(X) \le \gfree(A)+\gfree(B)$;

\item (Dimension upper bound)
$\gfree(X)\le \dim X+1$;

\item (Cohomology lower bound)
Assume that the order of $G$ is divisible by a prime $p$ and $X$ is connected and acyclic over $\Fp$ in degrees $\le N-1$, i.e. $H^i(X, \Fp)=0$ for all $0<i<N$. Then $\gfree(X) \ge N+1$. In particular, if $X$ is connected and acyclic over $\Fp$ in degrees $\le N-1$ and $G$ is a $p$-torus then $\gfree (X) \ge N+1$.

\item (Passing to a subgroup)\
\label{free-subgr}
If $X$ is a free $G$-space, and $F$ is a nontrivial subgroup of $G$, then $\gfree(X, G) \ge \gfree(X, F)$.
\end{enumerate}

Note that in the cohomology lower bound it is convenient to use \v Cech or Alexander--Spanier cohomology because they satisfy the continuity property. That is, if a cohomology class $\xi\in H^*(X)$ has nonzero restriction to a closed subspace $Y$, then it has nonzero restriction to an open subset $U\supseteq Y$. This property is useful to handle ``pathological'' subspaces $Y$. The cohomology lower bound in fact has the more general form
$$
\gfree(X)\ge \hind(X)+1,
$$
where $\hind(X)$ is the maximum dimension $n$ such that the natural map $H^n(BG, M)\to H_G^n(X, M)$ is non-trivial for some $G$-module $M$ (compare with the definition of $i_G$). This estimate for the free genus was established in~\cite{yang1955,cf1960} for $G=\Ztwo$ and in~\cite{schw1966} for arbitrary $G$. In~\cite{schw1966} the value $\hind(X)+1$ was called the \emph{homological genus}.

\section{Genus of a fixed point free space}
\label{fpf-genus}

The definition of the free genus does not allow one to work with $G$-spaces that are not free. Different definitions of the genus for non-free actions of $G$ were given in~\cite{clp1986,bart1990,clp1991}. The most general definitions can be found in the book~\cite{bart1993}; the case of compact Lie group instead of a finite group is also considered there. Here we make use of a certain kind of genus of actions of a finite group $G$ without fixed points, i.e. actions on a space $X$ such that the stabilizer of any point $x\in X$ is a proper subgroup of $G$.

\begin{defn}
Denote by $D_G$ the disjoint union of all orbits $G/H$, where $H$ is a subgroup of $G$ not equal to $G$.
\end{defn}

Note that any discrete fixed point free $G$-space can be $G$-mapped to $D_G$. Similar to the case of the free genus, the definition (following~\cite{bart1993}) of the fixed point free genus can be given in two different ways, that are equivalent for paracompact $G$-spaces through a standard partition of unity argument.

\begin{defn}
\emph{The fixed point free genus} of a fixed point free $G$-space $X$ is the least number $n$ such that $X$ can be $G$-equivariantly mapped to the $n$-fold join $D_G*\dots *D_G$. We denote the fixed point free genus by $g_G(X)$.
\end{defn}

\begin{defn}
\emph{The fixed point free genus} of a fixed point free $G$-space $X$ is the least number $n$ such that $X$ can be covered by $n$ open subsets $X_1,\ldots, X_n$ so that every $X_i$ can be $G$-mapped to $D_G$.
\end{defn}

Again, we state some properties of the fixed point free genus (see~\cite[Section~4.4]{vol2005} for proofs).

\begin{enumerate}
\item (Monotonicity)
If there is a $G$-map $f: X\to Y$, then $g_G(X) \le g_G(Y)$;

\item (Subadditivity)
Let $X=A\cup B$, where $A$, $B$ are closed or open $G$-invariant subspaces. Then $g_G(X) \le g_G(A)+g_G(B)$;

\item (Dimension upper bound)
$g_G(X)\le \dim X+1$;

\item (Cohomology lower bound)
If $X$ is connected and acyclic over $\Fp$ in degrees $\le N-1$ and $G$ is a $p$-torus then $g_G(X) \ge N+1$.
\end{enumerate}

It is clear from the definition, that $\gfree$ and $g_G$ coincide for groups $\Zp$, because for such groups $G=D_G$. 

Let us give an example that shows the difference between the free genus and the fixed point free genus. 

\begin{exa}
Let $G=\mathbb Z/r$, where $r=ab$ is a product of two coprime integers $a,b > 1$ and let $EG$ be the universal free $G$-space. Then $\gfree(EG)$ is infinite (it follows from the cohomology lower bound), while $g_G(EG)$ is finite. The second claim follows easily from the example of Conner and Floyd~\cite{cf1959} of a contractible finite simplicial complex $C$, on which $G$ acts without fixed points. Then $EG\times C$ is contractible and the diagonal action of $G$ on this space is free, so $EG\times C$ is $G$-homotopy equivalent to $EG$ by a simple equivariant obstruction theory argument (equivariant obstruction theory coincides with ordinary obstruction theory when the action is free). From the natural projection $EG\times C \to C$, the monotonicity, and the dimension upper bound on $g_G$ we obtain $g_G(EG)\le g_G(C)\le \dim C+1 < +\infty$.
\end{exa}

As was already mentioned in the proof of Corollary~\ref{knaster1}, the cohomology lower bound was generalized in~\cite[Proposition~4.7]{vol2005} to give the estimate
$$
g_G(X)\ge i_G(X).
$$

\section{Definitions of configuration-like spaces}
\label{conf-spaces}

First, we need some definitions of configuration spaces for a topological space $Y$. Actually, the space $Y$ will often be $\mathbb R^m$ in this paper.

\begin{defn}
Define the \emph{$k$-wise diagonal} 
\begin{multline*}
\Delta_q^k(Y) = \{(y_1, \ldots, y_q)\in Y^q : y_{i_1} = y_{i_2} = \dots = y_{i_k}\\ \text{for some}\ 
i_1 < i_2 < \dots < i_k\}.
\end{multline*}
\end{defn}

\begin{defn}
Define the \emph{$k$-wise maximum diagonal}
\begin{multline*}
\Deltad_q^k(\mathbb R) = \{(y_1, \ldots, y_q)\in \mathbb R^q : y_{i_1} = y_{i_2} = \dots = y_{i_k} = c\\ \text{for some}\ i_1 < i_2 < \dots < i_k,\ \text{and}\ \forall j\ y_j\le c\}.
\end{multline*}
It is clear that $\Deltad_q^k(\mathbb R)\subseteq \Delta_q^k(\mathbb R)$; for $k = q$ these diagonals are equal.
\end{defn}

\begin{defn}
Denote the \emph{configuration-like spaces} 
$$
V(Y,q,k) =Y^q\setminus \Delta_q^k(Y),\quad V(m,q,k)=V(\mathbb R^m,q,k).
$$ 
Put also 
$$
W(q,k)=\mathbb R^q\setminus \Deltad_q^k(\mathbb R).
$$
\end{defn}

Note that $W(q,k)\supseteq V(1, q, k)$ in general, and $W(q,q)=V(1,q,q)$ (since $\Delta_q^q(\mathbb R)=\Deltad_q^q(\mathbb R)$). The spaces $V(Y, q, k)$ are denoted $G(Y, q, k)$ in~\cite{colu1976}, but we do not use the letter $G$ here to avoid confusion with the group action.

We also need the configuration-like spaces of the following type.

\begin{defn}
Consider $\mathbb R^m = \mathbb R\times \mathbb R^{m-1}$ and denote
$$
V_1(m,q,k) = (\mathbb R^m)^q\setminus \Delta_q^k(\mathbb R)\times\Delta_q^q(\mathbb R^{m-1}).
$$ 
\end{defn}
These spaces may seem unnatural, but their genus can be calculated precisely in all cases, see Section~\ref{conf-genus}. Note that $V(m, q, k)\subseteq V_1(m, q, k)$.

In the sequel we usually consider a finite group $G$ with $|G|=q$; in this case we identify $Y^q$ with the space of maps $\Map(G, Y)$. The latter space has the natural $G$-action given in the following definition.

\begin{defn}
For a finite group $G$ and a topological space $Y$ consider the space $\Map(G,Y)$ of maps with the usual left $G$-action, i.e. for $\phi \in\Map(G,Y)$ we define $g\phi \in\Map(G,Y)$ as follows. For any $h\in G$ put
$$
(g\phi)(h) = \phi(g^{-1}h).
$$
\end{defn}

Note that the group ring $\mathbb R[G]$ can be identified with $\Map(G,\mathbb R)$ by the assignment $\phi\leftrightarrow \sum \phi(g)g$. It can be easily checked that this identification transforms the action of $G$ on $\Map(G,\mathbb R)$ defined above to the usual left action of $G$ on its group ring $\mathbb R[G]$.

\section{Genus of configuration-like spaces}
\label{conf-genus}

The index of configuration spaces $V(2, n, 2)$ was estimated from below in the papers of Smale and Vasil'ev~\cite{sma1987,vass1988}, to give lower bounds of the ``topological'' complexity of algorithms for finding the roots of a complex polynomial. In~\cite{rot2008,kar2009} similar estimates were given for $V(m, n, 2)$. In those papers the genus was considered with respect to the free action of the permutation group $\mathfrak S_n$ by permuting the points of configuration.

For the action of a $p$-torus $G$ (with $|G|=q$) some lower bounds on the genus of $V(m, q, k)$ were obtained in~\cite{vol2005,vol2007,kar2009}, using the homological lower bound, and a more accurate bound by the homological index $i_G(V(m, q, k))$.

We start with a simple geometric upper bound, valid for any finite group $G$.

\begin{thm}
\label{max-conf-space-genus} For a finite group $G$ and $2\le k\le q = |G|$ we have
$$
g_G(W(q, k))\le k-1.
$$ 
\end{thm}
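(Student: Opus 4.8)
The plan is to use the open-cover characterization of the fixed point free genus stated in Section~\ref{fpf-genus}: it suffices to cover $W(q,k)$ by $k-1$ open $G$-invariant subsets, each admitting a $G$-map to $D_G$. I identify $\mathbb R^q$ with $\Map(G,\mathbb R)$ as in the paper, so that a point is a function $\phi:G\to\mathbb R$ and $W(q,k)$ consists exactly of those $\phi$ whose maximum value $c=\max_{h\in G}\phi(h)$ is attained on fewer than $k$ elements of $G$.

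The main device is the order statistics. For $\phi\in\mathbb R^q$ let $v_1(\phi)\ge v_2(\phi)\ge\cdots\ge v_q(\phi)$ denote the values $\{\phi(h)\}_{h\in G}$ arranged in nonincreasing order. Each $v_j$ is a continuous function of $\phi$, and since the $G$-action only permutes the values $\phi(h)$, each $v_j$ is $G$-invariant. The defining condition of $W(q,k)$ forces $v_1(\phi)>v_k(\phi)$ for every $\phi\in W(q,k)$, so at least one of the strict inequalities $v_j(\phi)>v_{j+1}(\phi)$ with $1\le j\le k-1$ must hold. Accordingly I set
$$
U_j=\{\phi\in W(q,k): v_j(\phi)>v_{j+1}(\phi)\},\qquad j=1,\dots,k-1.
$$
These are open (the $v_j$ are continuous) and $G$-invariant (the $v_j$ are $G$-invariant), and the previous remark shows that they cover $W(q,k)$.

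It remains to produce a $G$-map $U_j\to D_G$ for each $j$. On $U_j$ the strict gap $v_j>v_{j+1}$ lets me define
$$
S_j(\phi)=\{h\in G:\phi(h)>v_{j+1}(\phi)\},
$$
which is a subset of $G$ of size exactly $j$ (precisely $j$ of the values exceed the $(j+1)$-st largest), hence nonempty and proper since $1\le j\le k-1\le q-1$. The strict gap also shows that $S_j$ is locally constant on $U_j$, so it is a continuous map from $U_j$ into the discrete $G$-set of $j$-element subsets of $G$ under left translation; the identity $S_j(g\phi)=gS_j(\phi)$ shows that it is $G$-equivariant. Because the only left-invariant subsets of $G$ are $\emptyset$ and $G$, no $j$-subset with $1\le j\le q-1$ has stabilizer equal to $G$, so every orbit in this $G$-set has the form $G/H$ with $H\ne G$; thus it is a fixed point free $G$-set and admits a $G$-map to $D_G$. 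Composing gives a $G$-map $U_j\to D_G$, and the open-cover definition of the genus yields $g_G(W(q,k))\le k-1$.

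I expect the only delicate points to be the two combinatorial observations: that membership in $W(q,k)$ guarantees a strict descent $v_j>v_{j+1}$ somewhere among the top $k-1$ gaps (so that the $U_j$ genuinely cover), and that $|S_j(\phi)|=j$ on the nose (so that $S_j$ lands in nonempty proper subsets). Both follow directly from the bookkeeping of order statistics once the strict gap is enforced, and the local constancy of $S_j$ is then immediate from the continuity of the $v_j$.
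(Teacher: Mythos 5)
Your proof is correct and is essentially the paper's own argument: your sets $U_j$ coincide with the paper's sets $V_j=\bigcup_{|M|=j}U_M$ (a point satisfies $v_j(\phi)>v_{j+1}(\phi)$ exactly when its top $j$ values are strictly separated from the rest), and your map $S_j$ is the paper's map $f_j$ sending each such point to the $j$-element subset of $G$ carrying those top values. The order-statistics phrasing merely repackages the paper's combinatorial verification of openness, invariance, equivariance, and local constancy, and the final step (a fixed point free discrete $G$-set maps to $D_G$, then apply the open-cover characterization, equivalently subadditivity) is identical.
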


\begin{proof}
By $\binom{G}{m}$ we denote the set of all $m$-element subsets of $G$ and consider it as a discrete $G$-space. For $M=\{g_1,\dots,g_m\}\in \binom{G}{m}$ and $g\in G$ we define $gM=\{gg_1,\dots, gg_m\}$.

The space $\binom{G}{m}$ has no $G$-fixed points for $m<q$. Indeed, if $M=\{g_1,\ldots, g_k\}\in\binom{G}{[k]}$ is a fixed point and $g\in G$ is some element, then the set $M'=\{g_1,\ldots, g_k\}\cdot \left(g^{-1}g_1\right)^{-1}\ni g$, but $M=M'$ (this is a fixed point), so $g\in M$ for every $g\in G$, hence $M=G$.

We identify $W(q, k)$ with a subset of $\Map(G, \mathbb R)$, as described above. For a nonempty subset $M\subset G$ we define an open subset $U_M\subset \Map(G, \mathbb R)$ by (here $\overline M=G\setminus M$ denotes the complement of $M$)
$$
U_M = \{\phi\in \Map(G, \mathbb R) : \phi(g) > \phi(h)\ \forall\ g\in M, h\in \overline M\},
$$
and for any $1\le m\le q-1$ we put
$$
V_m = \bigcup_{M\subset G,\ |M| = m} U_M.
$$

Now we are going to prove that:

\begin{enumerate}
\item
\label{a_disj}
For any different $M\not=M'$ of same size $|M|=|M'|$ the sets $U_M$ and $U_{M'}$ are disjoint;

\item
\label{a_action}
For any $g\in G$ we have $gU_M=U_{gM}$;

\item
\label{a_inv}
$V_m$ are invariant subspaces of $\Map(G,\mathbb R)$;

\item 
\label{a_uni}
We have the equality $W(q,k) = V_1\cup V_2\cup\dots\cup V_{k-1}$ and each $W(q, k-1)$ is open in $W(q, k)$;

\item
\label{a_cont}
The map $f_m : V_m\to \binom{G}{m}$ defined by $f_m(U_M) = M$ is continuous and $G$-equivariant.
\end{enumerate}

Assertion~(\ref{a_disj}) is almost obvious. For $M\subset G$ with $|M|=k$ the set $U_M$ consists of $\phi:G\to\mathbb R$ such that the $k$ largest elements in the set $\phi(G)$ are defined and correspond to $M$.

To prove assertion~(\ref{a_action}) it is enough to show that if $\phi\in U_M$ then $g\phi\in
U_{gM}$. Consider $\psi=g\phi\in gU_M$. Since $g^{-1}\psi=\phi\in U_M$ we have $\forall\ h\in M, h'\in \overline M$ the inequality $(g^{-1}\psi)(h)>(g^{-1}\psi)(h')$, i.e. $\psi(gh)>\psi(gh')$. Since $gh\in gM$ and $gh'\in g(\overline  M)=\overline{gM}$, we see that $\psi\in U_{gM}$, note that when $h$ runs through $M$ and $h'$ through $\overline M$ elements $gh$ and $gh'$ runs through $gM$ and $\overline{gM}$ respectively.

Assertion~(\ref{a_inv}) follows from~(\ref{a_action}).

To prove assertion~(\ref{a_uni}) note that $W(q,k)$ is the set of $\phi:G\to\mathbb R$ such that the maximum of $\phi$ is attained in $<k$ elements $g\in G$. Hence some $l<k$ maximal elements of $\phi(G)$ are separated from the other elements, that is $\phi\in V_l$ for some $l<k$. The inverse reasoning is also valid.

Assertion~(\ref{a_cont}) now follows from~(\ref{a_action}) and~(\ref{a_uni}), because the domain $V_m$ of $f_m$ is a union of disjoint open sets, and $f_m$ is constant on each of these sets by definition.

Now it follows from assertion~(\ref{a_cont}) that $g_G(V_m)=1$, from assertion~(\ref{a_uni}) that
$$
g_G(W(q,m)\setminus W(q,m-1)) \le g_G(V_m) = 1,
$$ 
and by the subadditivity of genus and assertion~(\ref{a_uni}) we obtain the inequality $g_G(W(q, k))\le k-1$ by induction.
\end{proof}

\begin{thm}
\label{conf-space-genus} The genus of the configuration-like spaces, under the action of $G$ described above, satisfies the upper bounds
$$
g_G(V(1,q,k))\le g_G(W(q, k))\le k-1
$$ 
and
\begin{multline*}
g_G(V(m,q,k))\le g_G(V_1(m,q,k))\le \\ \le g_G(V(m-1,q,q))+k-1\le (m-1)(q-1)+k-1.
\end{multline*}
\end{thm}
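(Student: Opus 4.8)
The plan is to derive both displayed chains from the two formal properties of $g_G$ recorded in Section~\ref{fpf-genus}---monotonicity and subadditivity---together with Theorem~\ref{max-conf-space-genus} and a single induction on $m$. Nothing here requires new topology; the work is entirely in choosing the right inclusions and decomposition.

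The first chain is immediate. Since $\Deltad_q^k(\mathbb R)\subseteq \Delta_q^k(\mathbb R)$, passing to complements gives the $G$-equivariant inclusion $V(1,q,k)\subseteq W(q,k)$ of subspaces of $\Map(G,\mathbb R)$, so monotonicity yields $g_G(V(1,q,k))\le g_G(W(q,k))$, and $g_G(W(q,k))\le k-1$ is exactly Theorem~\ref{max-conf-space-genus}. For the start of the second chain, a point with $k$ equal first coordinates and all second coordinates equal is a point of $(\mathbb R^m)^q$ with $k$ equal entries, so $\Delta_q^k(\mathbb R)\times\Delta_q^q(\mathbb R^{m-1})\subseteq \Delta_q^k(\mathbb R^m)$; taking complements gives $V(m,q,k)\subseteq V_1(m,q,k)$, and monotonicity gives the first inequality.

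The crux is the middle inequality $g_G(V_1(m,q,k))\le g_G(V(m-1,q,q))+k-1$, and here I would decompose $V_1$ along $\mathbb R^m=\mathbb R\times\mathbb R^{m-1}$. Let $\pi_1:\Map(G,\mathbb R^m)\to\Map(G,\mathbb R)$ and $\pi_2:\Map(G,\mathbb R^m)\to\Map(G,\mathbb R^{m-1})$ be the two coordinate projections; both are $G$-equivariant, since the action only permutes the domain $G$. A point lies in the removed set $\Delta_q^k(\mathbb R)\times\Delta_q^q(\mathbb R^{m-1})$ precisely when its first coordinates have a $k$-wise coincidence \emph{and} its second coordinates all agree, so by De~Morgan
$$
V_1(m,q,k)=A\cup B,\qquad A=\pi_1^{-1}\bigl(V(1,q,k)\bigr),\quad B=\pi_2^{-1}\bigl(V(m-1,q,q)\bigr).
$$
Both $A$ and $B$ are open (preimages of open configuration-like spaces under continuous maps) and $G$-invariant (the diagonals are permutation-invariant), so subadditivity applies: $g_G(V_1(m,q,k))\le g_G(A)+g_G(B)$. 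The restriction $\pi_1\colon A\to V(1,q,k)$ is a $G$-map, whence $g_G(A)\le g_G(V(1,q,k))\le k-1$ by the first chain; likewise $\pi_2\colon B\to V(m-1,q,q)$ is a $G$-map, giving $g_G(B)\le g_G(V(m-1,q,q))$. This establishes the middle inequality.

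It remains to bound $g_G(V(m-1,q,q))\le (m-1)(q-1)$, which I would obtain by induction on $m$, proving $g_G(V(m,q,q))\le m(q-1)$. The base case $m=1$ follows from the first chain ($g_G(V(1,q,q))\le q-1$), and the inductive step applies the already-proved inequalities $g_G(V(m,q,q))\le g_G(V_1(m,q,q))\le g_G(V(m-1,q,q))+(q-1)$ (the case $k=q$) to the hypothesis; there is no circularity, since the middle inequality was proved for all $m$ and $k$ independently of this final bound. The one step demanding care is the decomposition: one must check that $A$ and $B$ are genuinely open and $G$-invariant and that the coordinate projections restrict to honest $G$-maps onto $V(1,q,k)$ and $V(m-1,q,q)$. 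Once this is verified, the entire theorem is a formal consequence of monotonicity, subadditivity, and the recursion.
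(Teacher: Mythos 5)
Your proof is correct, and its core coincides with the paper's: your De~Morgan decomposition $V_1(m,q,k)=\pi_1^{-1}(V(1,q,k))\cup\pi_2^{-1}(V(m-1,q,q))$ is exactly the paper's decomposition of $V_1(m,q,k)$ as the union $\left(V(1,q,k)\times(\mathbb R^{m-1})^q\right)\cup\left(\mathbb R^q\times V(m-1,q,q)\right)$, followed by the same use of subadditivity and of monotonicity along the coordinate projections. Where you genuinely diverge is the final inequality $g_G(V(m-1,q,q))\le (m-1)(q-1)$. The paper obtains it geometrically: $V(m-1,q,q)$ is $G$-homotopy equivalent to the unit sphere of $I[G]^{m-1}$, a sphere of dimension $(m-1)(q-1)-1$ with fixed point free action, so monotonicity plus the dimension upper bound $g_G(X)\le\dim X+1$ finish the job. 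You instead bootstrap: induct on $m$ using the already-established recursion $g_G(V(m,q,q))\le g_G(V_1(m,q,q))\le g_G(V(m-1,q,q))+q-1$ (the case $k=q$), with base case $g_G(V(1,q,q))\le q-1$ from the first chain; as you note, there is no circularity, since the recursion was proved for all $m$ and $k$ without knowing the value of $g_G(V(m-1,q,q))$. Your route is purely formal --- it needs only Theorem~\ref{max-conf-space-genus}, monotonicity, and subadditivity, and avoids both the equivariant retraction onto the sphere and the dimension upper bound. The paper's route is shorter at this step and has a side benefit: the identification of $V(m,q,q)$ with a homotopy sphere is reused in Theorem~\ref{precise-genus-p-tori}, where the cohomology lower bound on the same sphere yields the matching equality $g_G(V(m,q,q))=m(q-1)$ for $p$-tori, so the paper extracts both the upper and the lower bound from one geometric fact. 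Both arguments are valid.
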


\begin{proof}
The first bound follows directly from Theorem~\ref{max-conf-space-genus} and the monotonicity of genus.

To prove the second bound, we note that under the decomposition $\mathbb R^m = \mathbb R\times \mathbb R^{m-1}$ we have
$$
V_1(m, q, k) \subseteq \left(V(m-1, q, q)\times \mathbb R^q\right) \times \left((\mathbb R^{m-1})^q\times V(1, q, k)\right).
$$
The right summands are $G$-equivariantly projected to $V(m-1,q,q)$ and $V(1,q,k)$ respectively, thus the subadditivity and the monotonicity of genus give the inequality
$$
g_G(V_1(m,q,k))\le g_G(V(m-1,q,q))+g_G(V(1, q, k)).
$$
Note that $V(m-1, q, q)$ is homotopy equivalent to $(m-1)(q-1)-1$-dimensional sphere with action of $G$ without fixed points, so by the dimension upper bound we have 
$$
g_G(V(m-1,q,q))\le (m-1)(q-1),
$$
and therefore
$$
g_G(V_1(m,q,k))\le (m-1)(q-1)+k-1.
$$
The inclusion $V(m, q, k)\subseteq V_1(m, q, k)$ implies that
$$
g_G(V(m,q,k))\le g_G(V_1(m,q,k)),
$$
which completes the proof.
\end{proof}

Now we are going to give some estimates on the genus of configuration-like spaces from below. First, we consider an arbitrary finite group $G$; in this case the bounds are expressed in terms of the genus $g_G(V(Y, q, q))$. In the case $Y=\mathbb R^m$, the space $V(m, q, q)$ is homotopy equivalent to $I[G]^m\setminus\{0\}$ (see the definition before Corollary~\ref{knaster1}), i.e. a $((q-1)m-1)$-dimensional sphere. 

For the case of an arbitrary finite group $G$, the genus $g_G(V(m, q, q))$ is not known. For $p$-tori the cohomology lower bound and the dimension upper bounds coincide; in this case $g_G(V(m, q, q))=(q-1)m$. Let us formulate lower bounds for $g_G(V(m, q, k))$ for an arbitrary finite group $G$; the case of $p$-tori is considered in the end of this section.

\begin{lem}
\label{lower-bound-onedim}
For the spaces $W(q,k)$ and $V(1,q,k)$ we have 
$$
g_G(W(q,k)) \ge g_G(W(q,q))-q+k = g_G(V(1, q, q))-q+k.
$$
\end{lem}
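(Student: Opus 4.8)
The plan is to read the subadditivity inequality from the proof of Theorem~\ref{max-conf-space-genus} ``backwards'', reusing verbatim the geometric decomposition built there. Recall that in that proof $W(q,m)$ was covered by the open $G$-invariant sets $V_1,\dots,V_{m-1}$, where $V_j=\bigcup_{|M|=j}U_M$ consists of those $\phi$ whose top $j$ values are strictly separated from the remaining ones, and that each $V_j$ admits a continuous $G$-equivariant map to the fixed point free discrete space $\binom{G}{j}$ (hence to $D_G$), so that $g_G(V_j)=1$ for $1\le j\le q-1$.

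First I would dispose of the equality on the right. Since $\Delta_q^q(\mathbb R)=\Deltad_q^q(\mathbb R)$, we have $W(q,q)=\mathbb R^q\setminus\Deltad_q^q(\mathbb R)=V(1,q,q)$ literally, so $g_G(W(q,q))=g_G(V(1,q,q))$ with no argument beyond the definitions.

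For the inequality I would exploit the nested family $W(q,k)\subseteq W(q,k+1)\subseteq\dots\subseteq W(q,q)$, which holds because enlarging $k$ shrinks the maximum diagonal $\Deltad_q^k(\mathbb R)$. Concretely, assertion~(\ref{a_uni}) in the proof of Theorem~\ref{max-conf-space-genus} gives $W(q,q)=V_1\cup\dots\cup V_{q-1}$ and $W(q,k)=V_1\cup\dots\cup V_{k-1}$, whence the open $G$-invariant cover
$$
W(q,q)=W(q,k)\cup V_k\cup V_{k+1}\cup\dots\cup V_{q-1}.
$$
Applying the subadditivity of $g_G$ to this cover (inductively over the finitely many pieces), together with $g_G(V_j)=1$ for $k\le j\le q-1$, yields
$$
g_G(W(q,q))\le g_G(W(q,k))+\sum_{j=k}^{q-1}g_G(V_j)=g_G(W(q,k))+(q-k).
$$
Rearranging gives $g_G(W(q,k))\ge g_G(W(q,q))-q+k$, which is the claim.

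The argument is essentially routine once the ingredients of Theorem~\ref{max-conf-space-genus} are available; the only points deserving a line of care are that the sets in the cover are genuinely open and $G$-invariant, so that subadditivity applies, and that the $V_j$ with $k\le j\le q-1$ really exhaust $W(q,q)\setminus W(q,k)$. I do not expect a substantive obstacle here, since both facts were already established while proving the upper bound, so this lemma is little more than a bookkeeping of that same decomposition.
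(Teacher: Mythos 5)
Your proof is correct and is essentially the paper's own argument: the paper likewise proves this lemma by reusing the decomposition from Theorem~\ref{max-conf-space-genus}, writing $W(q,q)\subseteq W(q,k)\cup V_k\cup\dots\cup V_{q-1}$ and applying subadditivity of $g_G$ together with $g_G(V_j)=1$. The equality $W(q,q)=V(1,q,q)$ is also handled the same way, directly from $\Delta_q^q(\mathbb R)=\Deltad_q^q(\mathbb R)$.
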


\begin{proof}
The inequality was actually proved in the proof of Theorem~\ref{max-conf-space-genus}, because (in the notation of that proof)
$$
W(q,q)\subseteq W(q,k)\cup V_k\cup V_{k+1}\cup\dots\cup V_{q-1}
$$
and we obtain the needed inequality by the subadditivity of genus.
\end{proof}

\begin{lem}
\label{lower-bound-klarge}
For an arbitrary metric space $Y$ we have 
$$
g_G(V(Y,q,k)) \ge g_G(V(Y,q,q))-q+k,
$$
when $k > q/2$.
\end{lem}

\begin{proof}
Note that for a metric space $Y$ the spaces $V(Y, q, k)$ are metric, and hence paracompact. Thus the additivity of the genus holds for such configuration-like spaces and their subsets.

Let us argue by descending induction on $k$. It is clear that
$$
V(Y, q, k+1)\setminus V(Y, q, k) = \Delta_q^k(Y)\setminus \Delta_q^{k+1}(Y).
$$
Let us map $X_k = \Delta_q^k(Y)\setminus \Delta_q^{k+1}(Y)$ to $\binom{G}{k}$ by assigning to the configuration 
$$
(x_1,\ldots, x_q)\in X_k
$$ 
the (unique, since $k>q/2$) $k$-element subset $M=f((x_1,\ldots, x_q))\subset [q]$ ($[q]$ is identified with $G$) such that $\{x_i\}_{i\in M}$ is a one point set. The map $f$ is locally constant on $X_k$; let us show this explicitly. For any $k$-element $M\subset [q]$ set
$$
U_M = \{(x_1, \ldots, x_q)\in Y^q : |\{x_i\}_{i\in M}| = 1 \},
$$
a closed set. The sets $U_M\cap X_k$ are disjoint, closed in $X_k$, and cover $X_k$. Since there is a finite number of such sets, then they are also open in $X_k$. Now it remains to note that $f$ is constant on every $U_M\cap X_k$.

The existence of the above map $f$ implies
$$
g_G(V(Y, q, k+1)\setminus V(Y, q, k))\le 1.
$$
Now the induction step is made by the subadditivity of $g_G$:
$$
g_G(V(Y, q, k+1)) \le g_G(V(Y, q, k)) + 1.
$$
\end{proof}

Now we are going to give some exact formulas for the genus in the case of $p$-tori.

\begin{thm} 
\label{precise-genus-p-tori}
If $G$ is a $p$-torus then 
$$
g_G(V(1, q, k)) = g_G(W(q,k))= k - 1,
$$ 
and 
$$
g_G(V_1(m, q, k)) = (m-1)(q-1)+k-1.
$$
If, in addition, $k > q/2$, then
$$
g_G(V(m,q,k))=(m-1)(q-1)+k-1.
$$
\end{thm}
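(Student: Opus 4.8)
All the upper bounds in the statement are already supplied by Theorem~\ref{conf-space-genus}, so the whole task reduces to producing matching lower bounds, and the single fact that makes everything fit together for a $p$-torus is the equality $g_G(V(m,q,q)) = m(q-1)$. The plan is to establish this first: $V(m,q,q)$ is $G$-homotopy equivalent to the unit sphere of $I[G]^m$, a fixed point free $G$-sphere of dimension $m(q-1)-1$, so the cohomology lower bound forces $g_G(V(m,q,q)) \ge m(q-1)$ while the dimension upper bound forces $g_G(V(m,q,q)) \le m(q-1)$. Specializing to $m=1$ gives $g_G(V(1,q,q)) = q-1$, which is the seed for all the one-dimensional estimates.

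For the first line, the upper bound $g_G(V(1,q,k)) \le g_G(W(q,k)) \le k-1$ is Theorem~\ref{conf-space-genus}, and for $W(q,k)$ the matching lower bound is exactly Lemma~\ref{lower-bound-onedim} together with $g_G(V(1,q,q))=q-1$. The remaining point is the lower bound $g_G(V(1,q,k)) \ge k-1$, and here I would prove the key auxiliary estimate
$$ g_G\bigl(\Delta_q^j(\mathbb R)\setminus\Delta_q^{j+1}(\mathbb R)\bigr)\le 1 \qquad (1\le j<q). $$
On this stratum the maximal blocks of exactly $j$ equal coordinates sit at pairwise distinct real values (two such blocks at a common value would merge into a larger one), so sending a configuration to the block of largest value is a well-defined, locally constant, $G$-equivariant map into the discrete fixed point free space $\binom{G}{j}$; this realizes genus $\le 1$. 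This is where the order on $\mathbb R$ is used to bypass the uniqueness hypothesis $k>q/2$ of Lemma~\ref{lower-bound-klarge}. Peeling $\Delta_q^k(\mathbb R)\setminus\Delta_q^q(\mathbb R)$ along these strata for $k\le j\le q-1$ gives, by subadditivity, $g_G(\Delta_q^k(\mathbb R)\setminus\Delta_q^q(\mathbb R))\le q-k$, and then
$$ q-1 = g_G(V(1,q,q)) \le g_G(V(1,q,k)) + (q-k) $$
yields $g_G(V(1,q,k))\ge k-1$.

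For $V_1(m,q,k)$ the upper bound is again Theorem~\ref{conf-space-genus}. For the lower bound I would exploit that the diagonal removed for $V(m,q,q)$ is contained in the one removed for $V_1(m,q,k)$, so $V_1(m,q,k)\subseteq V(m,q,q)$ and, writing $\mathbb R^m=\mathbb R\times\mathbb R^{m-1}$,
$$ V(m,q,q)\setminus V_1(m,q,k) = \bigl(\Delta_q^k(\mathbb R)\setminus\Delta_q^q(\mathbb R)\bigr)\times \Delta_q^q(\mathbb R^{m-1}). $$
Since $G$ acts trivially on the thin diagonal $\Delta_q^q(\mathbb R^{m-1})$, the projection to the first factor is a $G$-map of fixed point free spaces, so by monotonicity and the auxiliary estimate above $g_G(V(m,q,q)\setminus V_1(m,q,k))\le q-k$. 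Subadditivity then gives $m(q-1)=g_G(V(m,q,q))\le g_G(V_1(m,q,k))+(q-k)$, that is $g_G(V_1(m,q,k))\ge (m-1)(q-1)+k-1$, matching the upper bound.

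Finally, for $V(m,q,k)$ with $k>q/2$ the lower bound is immediate from Lemma~\ref{lower-bound-klarge} applied to $Y=\mathbb R^m$ together with $g_G(V(m,q,q))=m(q-1)$, since $g_G(V(m,q,k))\ge m(q-1)-q+k=(m-1)(q-1)+k-1$, again matching Theorem~\ref{conf-space-genus}. The only genuinely delicate step is the auxiliary estimate for the strata $\Delta_q^j\setminus\Delta_q^{j+1}$: for a general target $\mathbb R^m$ and $k\le q/2$ there is no canonical coinciding block to select, which is precisely why the sharp value for $V(m,q,k)$ is claimed only for $k>q/2$, whereas the one-dimensional space $V(1,q,k)$ and the hybrid space $V_1(m,q,k)$ survive for all $k$ because the order on the distinguished $\mathbb R$-factor restores a canonical choice.
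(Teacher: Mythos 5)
Your skeleton is right in several places: the upper bounds do come from Theorem~\ref{conf-space-genus}; the equality $g_G(V(m,q,q))=m(q-1)$ (homotopy sphere, cohomology lower bound against dimension upper bound) is exactly what the paper uses; your derivation of $g_G(W(q,k))\ge k-1$ from Lemma~\ref{lower-bound-onedim} is valid; and the case $k>q/2$ of $V(m,q,k)$ via Lemma~\ref{lower-bound-klarge} is the paper's argument verbatim. The proof breaks at your auxiliary estimate $g_G\bigl(\Delta_q^j(\mathbb R)\setminus\Delta_q^{j+1}(\mathbb R)\bigr)\le 1$, on which both your lower bound for $V(1,q,k)$ and your lower bound for $V_1(m,q,k)$ depend whenever $k\le q/2$. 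The map ``send a configuration to its size-$j$ block of largest value'' is not locally constant: for $q=4$, $j=2$, the point $(0,0,1,1)$ goes to $\{3,4\}$, while the arbitrarily close point $(0,0,1,1+\epsilon)$ still lies in the stratum (the block $\{1,2\}$ survives) but goes to $\{1,2\}$. The defect is exactly the non-uniqueness of size-$j$ blocks: a block of size $j\le q/2$ can dissolve under arbitrarily small perturbation without the point leaving the stratum, so the collection of size-$j$ blocks is not locally constant; this is precisely what the hypothesis $k>q/2$ in Lemma~\ref{lower-bound-klarge} prevents, and the order on $\mathbb R$ does not restore it.

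Moreover, the auxiliary estimate is not merely unproven but false for general $p$-tori. Take $G=(\mathbb Z/3)^2$, $q=9$, $j=3$, let $H<G$ have order $3$, and let $\phi_0$ be the configuration whose blocks are the cosets $H$, $aH$, $a^2H$ at values $0,1,2$. Inside the stratum one may split the two top blocks while the bottom one stays intact, let the six freed points rearrange above the intact block, and reassemble the two cosets in the opposite vertical order; the same works for the two bottom blocks, and these adjacent transpositions generate all orderings of the three cosets. Since every $g\in G$ carries $\phi_0$ to a configuration of exactly this form (cosets of $H$ in some order), the whole orbit $G\phi_0$ lies in one connected component $C$, i.e. $C$ is a connected, $G$-invariant, fixed point free space. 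A $G$-map from such a $C$ to the discrete space $D_G$ would be constant, and its value would be a $G$-fixed point of some $G/H'$ with $H'$ proper, which is impossible; hence $g_G(C)\ge 2$, and by monotonicity $g_G\bigl(\Delta_9^3(\mathbb R)\setminus\Delta_9^4(\mathbb R)\bigr)\ge 2$, so the peeling cannot yield the bound $q-k$. (This coset phenomenon is exactly why Lemma~\ref{lower-bound-pconf} is stated only for $G=\Zp$.) What your proof is missing is the paper's actual argument for these two cases, which needs no stratification at all: $V(1,q,k)$ and $V_1(m,q,k)$ are complements of unions of linear subspaces of codimension $k-1$ in $\mathbb R^q$, respectively of codimension $(m-1)(q-1)+k-1$ in $\mathbb R^{mq}$, hence they are $(k-3)$-connected, respectively $\bigl((m-1)(q-1)+k-3\bigr)$-connected, and the cohomology lower bound for $p$-tori immediately gives the matching lower bounds $k-1$ and $(m-1)(q-1)+k-1$.
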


\begin{proof}
The space $V(1, q, k)$ is a complement to a system of $(q-k+1)$-dimensional linear subspaces in $\mathbb R^q$. Thus it is $(k-3)$-connected, where ``$(-1)$-connected'' means ``arbitrary space''. Thus the cohomology lower bound (for a $p$-toral action) and the monotonicity of the genus give
$$
g_G(W(q, k)) \ge g_G(V(1, q, k)) \ge k-1.
$$
From Theorem~\ref{max-conf-space-genus} we have $g_G(W(q, k))\le k-1$, thus all the inequalities are equalities.

Similarly, the space $V_1(m, q, k)$ is a complement in $\mathbb R^{mq}$ to a system of $(m + q - k)$-dimensional linear subspaces, thus it is $c$-connected for 
$$
c = (m-1)(q-1)+k-3,
$$ 
and from the cohomology lower bound we have 
$$
g_G(V_1(m ,q, k)) \ge (m-1)(q-1)+k-1,
$$
which coincides with the upper bound in Theorem~\ref{conf-space-genus}.

Now consider the space $V(m, q, k)$. Note that
$$
g_G(V(m,q,q))=m(q-1),
$$ 
because this configuration space is a homotopy sphere, and its cohomology lower bound for $g_G$ coincides with the dimension upper bound. If $k > q/2$, the lower bound for $g_G(V(m,q,k))$ is obtained from Lemma~\ref{lower-bound-klarge}, and coincides with the upper bound in Theorem~\ref{conf-space-genus}.
\end{proof}

If we consider an arbitrary $m$-dimensional manifold $M$, then the corresponding configuration space $V(M, q, k)$ obviously contains a copy of $V(m, q, k)$. Hence, from monotonicity, we have
$$
g_G(V(M, q, k)) \ge g_G(V(m, q, k)).
$$
Let us improve this bound by $+1$ in one particular case.

\begin{thm}
\label{lower-bound-manifold}
Let $G$ be a $p$-torus, let $M$ be a smooth oriented (if $p\not=2$) closed manifold of dimension $m$, and let $k>q/2$. Then
$$
g_G(V(M, q, k)) \ge (m-1)(q-1) + k.
$$
\end{thm}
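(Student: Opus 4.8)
The plan is to reduce to the extremal case $k=q$ and then to show, via the Borel spectral sequence, that the nonzero mod~$p$ class of the diagonal delays the essential transgression by one, gaining a unit of genus over the linear model.

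First I would reduce to $k=q$. Since $k>q/2$, Lemma~\ref{lower-bound-klarge} with $Y=M$ gives $g_G(V(M,q,k))\ge g_G(V(M,q,q))-q+k$, and because $(m-1)(q-1)+k=\bigl(m(q-1)+1\bigr)-q+k$ it suffices to prove
\[
 g_G\bigl(V(M,q,q)\bigr)\ge m(q-1)+1 .
\]
Here $V(M,q,q)=M^q\setminus\Delta$, where $\Delta\cong M$ is the thin diagonal; it is pointwise $G$-fixed, so its complement is a fixed point free $G$-space. Writing $N=m(q-1)$ for the codimension of $\Delta$ and using $g_G\ge i_G$, I would aim to prove $i_G(M^q\setminus\Delta)\ge N+1$, i.e.\ that no differential $d_r$ with $2\le r\le N$ has nonzero image in the bottom row $H^*(BG)$ of the spectral sequence of $(M^q\setminus\Delta)_{hG}\to BG$. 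For comparison, in the linear model $V(\mathbb R^m,q,q)\simeq S^{N-1}$ the single fibre class in degree $N-1$ transgresses to the nonzero Euler class of $I[G]\otimes\mathbb R^m$ at page $N$, giving $i_G=N$; the point is to push this up to $N+1$.

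The mechanism is the nonvanishing of the diagonal class $[\Delta]\in H^N(M^q;\Fp)$, which holds precisely because $M$ is closed and $\Fp$-oriented (any factor projection restricts to a homeomorphism on $\Delta$, so the Poincar\'e dual of $\Delta$ is nonzero). Through the Gysin sequence of $\Delta\hookrightarrow M^q$ this makes the inclusion $M^q\setminus\Delta\hookrightarrow M^q$ a mod~$p$ cohomology isomorphism in all degrees $\le N-1$: the connecting map $H^0(\Delta)\xrightarrow{\,\cdot[\Delta]\,}H^N(M^q)$ is injective and therefore kills the linking class that, in the linear model, lives in degree $N-1$ and carries the early transgression. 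Consequently every fibre class of $M^q\setminus\Delta$ in degrees $\le N-1$ is the restriction of a class on $M^q$. Now $M^q$ carries the $G$-fixed diagonal point, so the structure map $H^*(BG)\to H^*_G(M^q)$ is split injective and the bottom row of the $M^q$-spectral sequence survives entirely; by naturality of the two spectral sequences along the inclusion, the fibre classes of $M^q\setminus\Delta$ in degrees $\le N-1$ cannot transgress onto the bottom row either. Hence the first differential into $H^*(BG)$ is the transgression of a genuinely new class, which now appears only in fibre degree $\ge N$, so $i_G(M^q\setminus\Delta)\ge N+1$ and $g_G\ge N+1=m(q-1)+1$.

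The main obstacle is exactly this naturality step in the spectral sequence. Because the action is only fixed point free (not free) for a higher-rank torus, the fibre cohomology $H^*(M^q)$ is a nontrivial $G$-module and the relevant $E_2$-terms are the group cohomologies $H^*(G;H^*(M^q\setminus\Delta))$; one must check carefully that no low-page differential issuing from these twisted classes reaches the bottom row, i.e.\ that the isomorphism with the $M^q$-sequence in fibre degrees $\le N-1$ persists to all pages in the range needed. For $G=\Zp$ the space is actually free, and here the cleaner bound $\gfree\ge\hind+1$ closes the argument at once: the split injection of $H^s(B\Zp)$ ($s\le N$) into $H^*_G(M^q)$, composed with the Gysin restriction whose kernel $\Fp\,[\Delta]_G$ lies outside the bottom filtration (as $[\Delta]_G$ has nonzero fibre restriction $[\Delta]\neq0$), shows $\hind\ge N$. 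The conceptual crux throughout, and the precise place where closedness and $\Fp$-orientability enter, is the nonvanishing of $[\Delta]$, which is what raises the essential transgression from page $N$ to page $N+1$.
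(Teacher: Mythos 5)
Your reduction via Lemma~\ref{lower-bound-klarge} to the single inequality $g_G(V(M,q,q))\ge m(q-1)+1$ is exactly the paper's first step, and your identification of the geometric input (nonvanishing of the dual class of the diagonal, which is where closedness and $\Fp$-orientability enter) is the right one. But the paper does not prove that inequality here: it cites \cite{kar2009} for $g_G(V(M,q,q))\ge i_G(V(M,q,q))\ge m(q-1)+1$. You attempt to prove it, and your proof has a genuine gap precisely at the step you yourself call ``the main obstacle''. To conclude that no $d_r$ with $2\le r\le N$ (where $N=m(q-1)$) has nonzero image in the bottom row of the Borel spectral sequence of $M^q\setminus\Delta$, naturality alone does not suffice: you would need the comparison map from the spectral sequence of $M^q$ to that of $M^q\setminus\Delta$ to be surjective on fibre rows $\le N-1$ at every page $r\le N$, not merely at $E_2$. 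The obvious induction fails: given a $d_r$-cocycle $x$ for $M^q\setminus\Delta$ and a lift $\bar x$ for $M^q$, one only knows that $d_r\bar x$ lies in the kernel of the comparison map, and from page $3$ onward that kernel can be nonzero on rows $1,\dots,N-2$, because differentials originating in fibre degrees $\ge N$ --- exactly where $H^*(M^q)$ and $H^*(M^q\setminus\Delta)$ differ --- may kill low-row classes for the complement that survive for $M^q$. An $E_2$-isomorphism in a range of fibre degrees does not in general persist to later pages, so this is not a routine verification; it is the actual content of the result cited from \cite{kar2009}.

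Your fallback closes the gap only for $G=\Zp$, where the action on $M^q\setminus\Delta$ is free: there, injectivity of $H^s(BG)\to H^s_G(M^q\setminus\Delta)$ for $s\le N$ (which your Gysin/diagonal-class argument does establish: the Gysin image vanishes in degrees $<N$, and in degree $N$ it meets $\im\pi^*$ trivially because $[\Delta]_G$ restricts nontrivially to the fibre while pullbacks from $BG$ restrict to zero) gives $\hind\ge N$, hence $\gfree\ge N+1$. But the theorem concerns arbitrary $p$-tori, and for a higher-rank torus acting merely without fixed points no bound of this type can hold: for a single orbit $G/H$ with $H$ a proper subtorus, $H^*_G(G/H;\Fp)=H^*(BH;\Fp)$ and the structure map is the restriction $H^*(BG;\Fp)\to H^*(BH;\Fp)$, which is split surjective and nonzero in every degree, while $g_G(G/H)=1$. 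This is precisely why the index $i_G$ is defined through spectral-sequence differentials rather than through injectivity of $H^*(BG)\to H^*_G(X)$, and why the higher-rank case requires the harder argument that the paper outsources to \cite{kar2009}. As it stands, your proposal proves the theorem only for $G=\Zp$.
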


\begin{proof}
In~\cite{kar2009} it was shown that
$$
g_G(V(M, q, q))\ge i_G(V(M, q, q)) \ge m(q-1) + 1.
$$
Now Lemma~\ref{lower-bound-klarge} implies the required inequality.
\end{proof}

\begin{rem}
In~\cite{schw1966} it was shown that some additional restrictions on the manifold $M$ may give better lower bounds on $g_G(M,q,q)$ for $q=p$ and $G=\Zp$. Hence, similar to Theorem~\ref{lower-bound-manifold}, the genus $g_G(M,q,k)$ for $k>q/2$ has better bounds from below. The restrictions on $M$ are expressed in terms of Smith's operations in homology (or, equivalently, certain characteristic classes of $M$).
\end{rem}

\begin{rem}
A lower bound of $g_G(V(m, q, k))$ for a $p$-torus $G$ and arbitrary $m$ and $k$ is also obtained in~\cite{kar2009}, estimating the genus by the homological index $i_G$. In the case $G=\Zp$ ($p$ is a prime) that lower bound coincides with the upper bound of Theorem~\ref{conf-space-genus}, while in the case of general $p$-tori the lower bound takes the form:
$$
g_G(V(m,q,k))\ge (m-1)(q-q/p)+k-1.
$$
Thus, in the case $k\le q/2$ and $q$ not a prime, the gap between the lower and the upper bounds still remains.
\end{rem}

In the case $q=p$ we have an even better bound for $k=2$ (the classical configuration space).

\begin{lem}
\label{lower-bound-pconf}
If $G=\Zp$, so $q=p$, then for an arbitrary metric space $Y$ we have
$$
g_G(V(Y, p, 2)) \ge g_G(V(Y,p,p)) - p + 2.
$$
\end{lem}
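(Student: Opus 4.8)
The plan is to obtain the whole inequality from a single application of subadditivity. Since $p$ is prime, the action of $G=\Zp$ on $V(Y,p,p)$ is free: a point with nontrivial stabilizer would be fixed by all of $G$ and hence have all coordinates equal, which is excluded. In particular all the spaces below are fixed point free, and (as $G=D_G$ for $\Zp$) their $g_G$ agrees with $\gfree$; moreover, being metric they are paracompact, so subadditivity and monotonicity of the genus apply just as in the proof of Lemma~\ref{lower-bound-klarge}. I write
$$
V(Y,p,p)=V(Y,p,2)\cup B,\qquad B=\Delta_p^2(Y)\cap V(Y,p,p),
$$
where $V(Y,p,2)$ is open and $B$ is the closed, $G$-invariant locus of configurations that carry at least one coincidence but are not totally coincident. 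Subadditivity gives $g_G(V(Y,p,p))\le g_G(V(Y,p,2))+g_G(B)$, so the statement reduces to the single bound $g_G(B)\le p-2$.

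To bound $g_G(B)$ I aim to construct a continuous $G$-equivariant map $F\colon B\to W(p,p-1)$ and then invoke Theorem~\ref{max-conf-space-genus}, which yields $g_G(W(p,p-1))\le p-2$; monotonicity then gives $g_G(B)\le p-2$. Identifying $\mathbb R^p$ with $\Map(G,\mathbb R)$, I look for $F$ of the averaged form
$$
F(x_1,\dots,x_p)(i)=\sum_{j}\rho\bigl(d(x_i,x_j)\bigr),
$$
where $\rho\colon[0,\infty)\to\mathbb R$ is a fixed continuous, strictly increasing, strictly concave weight with $\rho(0)=0$. Such an $F$ is automatically continuous and $G$-equivariant, and, crucially, it takes equal values on any two coordinates carrying equal points; hence the maximum of $F(\xi)$ is always attained on a union of full value-classes of $\xi$. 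Since membership in $W(p,p-1)=\mathbb R^p\setminus\Deltad_p^{p-1}(\mathbb R)$ means precisely that the maximum is attained at at most $p-2$ coordinates, the task becomes to show that for every $\xi\in B$ the value-classes meeting the maximum occupy at most $p-2$ positions.

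This metric-combinatorial claim is the step I expect to be the main obstacle. The prime $p$ enters through the partition of $\{1,\dots,p\}$ into value-classes: as $\xi$ is neither totally coincident nor totally distinct, there are at least two classes and at least one class of size $\ge 2$, and primality rules out the degenerate equipartitions $k\mu=p$. The dangerous configurations are those whose maximum is realized at exactly $p-1$ coordinates, leaving a single coordinate $z$ (necessarily a singleton value) strictly below; here one must show that strict concavity \emph{together with the triangle inequality} pushes such an isolated-yet-central point $z$ up to, or above, the maximum rather than below it, so the $(p-1,1)$ pattern cannot occur, and likewise that $F(\xi)$ is never constant on $B$. I would carry this out by a short case analysis on the sizes of the value-classes meeting the maximum, estimating $F(z)-F(w)$ for a putative minimal coordinate $z$ and a maximal coordinate $w$ by means of the subadditivity $\rho(s+t)\le\rho(s)+\rho(t)$ of a concave $\rho$ and the metric relations among the classes. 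Once this is settled, $F$ lands in $W(p,p-1)$, Theorem~\ref{max-conf-space-genus} applies, and the proof concludes.
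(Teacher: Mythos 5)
Your reduction step is fine and matches the paper's strategy: writing $V(Y,p,p)=V(Y,p,2)\cup B$ with $B$ the closed invariant locus of partially-coincident configurations, subadditivity does reduce the lemma to $g_G(B)\le p-2$. But the step you flag as ``the main obstacle'' is not merely unproven --- it is false, for \emph{every} weight $\rho$ in the class you allow. Take $p=5$ and let $Y$ be a tripod: three segments of lengths $w,y,y$ glued at a common point $d$, with free endpoints $a,b,c$, so that
$$
d(b,d)=d(c,d)=y,\quad d(b,c)=2y,\quad d(a,d)=w,\quad d(a,b)=d(a,c)=w+y.
$$
Given any continuous strictly increasing $\rho$ with $\rho(0)=0$, choose $y$ small and then $w$ with $\rho(w)=\rho(2y)+\rho(y)$ (intermediate value theorem). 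For the configuration $\xi=(a,a,b,c,d)\in B$ your map gives
$$
F_a=2\rho(w+y)+\rho(w),\qquad F_b=F_c=2\rho(w+y)+\rho(2y)+\rho(y)=F_a,
$$
$$
F_d=2\rho(w)+2\rho(y),\qquad F_a-F_d\ \ge\ 2\rho(w)-\rho(w)-2\rho(y)=\rho(2y)-\rho(y)>0.
$$
So the maximum of $F(\xi)$ is attained at exactly the four coordinates carrying $a,a,b,c$, i.e.\ at $p-1$ coordinates: $F(\xi)\in\Deltad_5^4(\mathbb R)$, hence $F(\xi)\notin W(5,4)$. Note the computation uses only monotonicity of $\rho$, so neither strict concavity nor the triangle inequality can rescue the construction by a cleverer choice of $\rho$: your heuristic that concavity pushes an ``isolated-yet-central'' point up to the maximum fails precisely because the isolated point $d$ is the \emph{center} of the configuration while the doubled point $a$ is far away, which inflates $F_a$ back up to the level of $F_b,F_c$.

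The paper bounds $g_G(B)$ by a discrete, not continuous, device. It stratifies $B$ by the number of distinct values: $X_k=U(Y,p,k)\setminus U(Y,p,k+1)$, $k=2,\dots,p-1$, where $U(Y,p,l)$ is the set of configurations with at least $l$ distinct points. On each stratum the pattern of coincidence (the partition of $[p]$ into value-classes) is locally constant, giving a locally constant equivariant map from $X_k$ to the finite $G$-set of partitions of $[p]$ with exactly $k$ parts; primality of $p$ guarantees this finite $G$-set is fixed point free (a $\Zp$-invariant partition into $2\le k\le p-1$ parts cannot exist), whence $g_G(X_k)=1$. Iterating subadditivity over the nested sets $U(Y,p,k)$ gives $g_G(V(Y,p,2))\ge g_G(V(Y,p,p))-p+2$. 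If you want to keep your framework, replace the single map $F:B\to W(p,p-1)$ by this stratum-by-stratum argument: it is exactly the discreteness of the target (patterns rather than real numbers) that makes the degenerate $(p-1,1)$-type configurations harmless.
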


\begin{proof}
Let us define another configuration space ($|\cdot|$ denotes the cardinality of a finite set)
$$
U(Y, p, l) = \{(x_1,\ldots, x_p)\in Y^p : |\{x_1,\ldots,x_p\}|\ge l \}.
$$
Obviously, $U(Y,p,2) = V(Y,p,p)$ and $U(Y,p,p) = V(Y,p,2)$.

For any $k = 2,\ldots,p-1$, the difference $X_k = U(Y,p,k)\setminus U(Y,p,k+1)$ has genus $1$. This
can be shown as follows (similar to the proof of Lemma~\ref{lower-bound-klarge}). Call a partition $\mathcal P = \{A_1, \ldots, A_s\}$ of the set $[p]$ a \emph{pattern of coincidence}, and denote the subset of the Cartesian power that has these coincidences (and possibly other coincidences) by
$$
F_{\mathcal P} = \{(x_1,\ldots, x_p)\in Y^p : \forall t=1,\ldots,s\ |\{x_i\}_{i\in A_t}| =1\};
$$
these sets are closed. Note that the sets $X_k\cap F_{\mathcal P}$ for $|\mathcal P| = k$ give a partition of $X_k$, hence each $X_k\cap F_{\mathcal P}$ is open in $X_k$. Now we assign to every $(x_1,\ldots, x_p)\in X_k$ its unique $k$-element pattern of coincidence, thus obtaining a locally constant map $f$ from $X_k$ to the set of all patterns of size $k$. This map is equivariant, and $\Zp$ acts on such patterns without fixed points if $k\in [2,p-1]$ (here we essentially use $G=\Zp$).

The sets $U(Y,p,k)$ are closed, hence by the subadditivity of the genus
\begin{multline*}
g_G(U(Y,p,p)) \ge g_G(U(Y,p,p-1)) - 1 \ge \\ \ge \dots \ge g_G(U(Y,p,2)) - p + 2.
\end{multline*}

\end{proof}

Using the inequality from~\cite{kar2009} for $\Fp$-oriented manifolds
$$
g_G(V(M,p,p)) \ge i_G(V(M,p,p)) \ge m(p-1) + 1,
$$
we obtain the following lower bound on the genus of the classical configuration space of a closed manifold.

\begin{cor}
\label{lower-bound-pconf-manifold}
Let $G=\Zp$, so $q=p$, and let $M$ be a smooth oriented (if $p\neq 2$) closed manifold of dimension $m$. Then
$$
g_G(V(M, p, 2)) \ge (m-1)(p-1) + 2.
$$
\end{cor}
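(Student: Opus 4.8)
The plan is to obtain the bound by feeding the full configuration space into Lemma~\ref{lower-bound-pconf}. Since that lemma is stated for an arbitrary metric space $Y$, and a smooth closed manifold $M$ is metrizable (hence paracompact, as the lemma's proof requires), I would take $Y=M$ and apply it directly, yielding
$$
g_G(V(M, p, 2)) \ge g_G(V(M,p,p)) - p + 2.
$$
This already reduces the whole problem to a lower bound on the genus of the \emph{full} configuration space $V(M,p,p)$ of $p$ pairwise distinct points, on which $G=\Zp$ acts by cyclic permutation.

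For that full configuration space I would invoke the homological index estimate quoted from~\cite{kar2009} just before the statement, namely
$$
g_G(V(M,p,p)) \ge i_G(V(M,p,p)) \ge m(p-1) + 1.
$$
The first inequality is the general estimate $g_G(X)\ge i_G(X)$ from~\cite[Proposition~4.7]{vol2005} for fixed point free actions; the second is the genuine geometric input, valid precisely because $M$ is $\Fp$-oriented. Note this is exactly where the hypothesis enters: for $p=2$ every manifold is $\Fp$-orientable, which explains why orientability is demanded only for odd $p$.

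Combining the two displayed inequalities and simplifying,
$$
g_G(V(M,p,2)) \ge \bigl(m(p-1)+1\bigr) - p + 2 = m(p-1) - (p-1) + 2 = (m-1)(p-1) + 2,
$$
which is the claimed bound.

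Since all the nontrivial content lives in the two inputs—the combinatorial genus-drop of Lemma~\ref{lower-bound-pconf}, which exploits that $\Zp$ acts without fixed points on the set of $k$-element patterns of coincidence for $2\le k\le p-1$, and the index computation of~\cite{kar2009}, which rests on Poincar\'e duality for the $\Fp$-oriented manifold $M$—the corollary itself is a short deduction. I therefore anticipate no real obstacle beyond the two bookkeeping points: confirming that $M$ qualifies as the metric space required by the lemma, and matching the orientation hypothesis to the one needed for the cited $i_G$ bound.
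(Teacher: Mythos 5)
Your proposal is correct and is essentially identical to the paper's own argument: the paper also combines Lemma~\ref{lower-bound-pconf} (applied with $Y=M$) with the inequality $g_G(V(M,p,p)) \ge i_G(V(M,p,p)) \ge m(p-1)+1$ from~\cite{kar2009}, and the arithmetic matches. Your two bookkeeping checks (metrizability of $M$ and the orientation hypothesis) are fine and add useful clarity, but nothing further is needed.
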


\begin{rem}
This corollary gives a shorter proof (without spectral sequences) of the main theorem in~\cite{kar2009bil}.
\end{rem}

\section{Proof of the coincidence theorem}
\label{coinc-proof-sec}

First we state the main tool to estimate the genus of inverse images under $G$-maps.

\begin{thm}
\label{inverse-image-genus}
Let $X$ be a fixed point free $G$-space, and let $E$ be a $G$-space with fixed point set contained in a closed $G$-invariant subspace $P\subset E$. Let $f : X\to E$ be an equivariant map. Then $g_G(f^{-1}(P))\ge g_G(X)-g_G(E\setminus P)$. In particular $f^{-1}(P)\not=\emptyset$ if $g_G(X)>g_G(E\setminus P)$.
\end{thm}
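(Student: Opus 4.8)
The plan is to realize $X$ as the union of the preimages of $P$ and its complement, and then to push the estimate through the subadditivity and monotonicity of $g_G$ recorded in Section~\ref{fpf-genus}. First I would write
$$
X = f^{-1}(P)\cup f^{-1}(E\setminus P).
$$
Because $P$ is closed and $G$-invariant and $f$ is continuous and equivariant, the set $A=f^{-1}(P)$ is a closed $G$-invariant subspace of $X$, while $B=f^{-1}(E\setminus P)$ is an open $G$-invariant subspace. Subadditivity of the fixed point free genus then gives
$$
g_G(X)\le g_G(A)+g_G(B)=g_G(f^{-1}(P))+g_G(f^{-1}(E\setminus P)).
$$

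The key remaining point is to control the second summand. Here the hypothesis on the fixed point set is exactly what is needed: since every $G$-fixed point of $E$ lies in $P$, the complement $E\setminus P$ carries no $G$-fixed points, so it is a fixed point free $G$-space and $g_G(E\setminus P)$ is defined. Restricting $f$ produces a $G$-equivariant map $f^{-1}(E\setminus P)\to E\setminus P$, and the domain is itself fixed point free, being a subspace of the fixed point free space $X$. Monotonicity of $g_G$ therefore yields $g_G(f^{-1}(E\setminus P))\le g_G(E\setminus P)$. Substituting this into the displayed inequality and rearranging gives precisely
$$
g_G(f^{-1}(P))\ge g_G(X)-g_G(E\setminus P).
$$
For the final assertion I would invoke the convention $g_G(\emptyset)=0$: if $g_G(X)>g_G(E\setminus P)$ then the right-hand side is strictly positive, so $g_G(f^{-1}(P))>0$ and hence $f^{-1}(P)\neq\emptyset$.

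The argument is essentially formal, so I do not anticipate a genuine obstacle; the one place demanding care is the bookkeeping that makes the two genus invariants applicable. I would make sure that $E\setminus P$ (and thus $f^{-1}(E\setminus P)$) is genuinely fixed point free, so that $g_G$ is defined on it, and that the closed piece $f^{-1}(P)$ and open piece $f^{-1}(E\setminus P)$ meet the paracompactness convention under which subadditivity was stated in Section~\ref{fpf-genus}. Granting this, the proof reduces to the single decomposition above together with the two basic properties of $g_G$.
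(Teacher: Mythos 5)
Your proof is correct and follows exactly the paper's own argument: decompose $X$ as $f^{-1}(P)\cup f^{-1}(E\setminus P)$, bound $g_G(f^{-1}(E\setminus P))\le g_G(E\setminus P)$ by monotonicity, and conclude by subadditivity. The extra bookkeeping you supply (that $E\setminus P$ is fixed point free so $g_G$ is defined on it, and the closed/open nature of the two pieces) is implicit in the paper's one-line proof and is handled correctly.
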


\begin{proof} We have $X=f^{-1}(P)\cup f^{-1}(E\setminus P)$, and since $g_G(f^{-1}(E\setminus P))\le g_G(E\setminus P))$ by the monotonicity of genus, we obtain the desired inequality from the subadditivity of genus.
\end{proof}

Applying this theorem several times we obtain the following statement.

\begin{thm}
\label{inverse-image-intersection-genus}
Let $f_i:X\to E_i$, $i=1,\dots,r$ be $G$-maps and let closed $G$-invariant subspaces $P_i\subset E_i$ contain all fixed points of their respective $E_i$. Then
$$
g_G(\bigcap_{i=1}^r f_i^{-1}(P_i))\ge g_G(X)-\sum_{i=1}^r g_G(E_i\setminus P_i).
$$
\end{thm}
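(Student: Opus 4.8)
The plan is to iterate Theorem~\ref{inverse-image-genus} exactly $r$ times, peeling off one map at a time. I would introduce the nested sequence of preimages $Z_0 = X$ and
$$
Z_j = Z_{j-1}\cap f_j^{-1}(P_j) = \bigcap_{i=1}^j f_i^{-1}(P_i), \qquad j = 1,\dots,r,
$$
so that $Z_r$ is precisely the intersection whose genus we wish to bound. The goal is then to prove by induction on $j$ the estimate
$$
g_G(Z_j) \ge g_G(X) - \sum_{i=1}^j g_G(E_i\setminus P_i),
$$
with the base case $j=0$ being trivial and the case $j=r$ giving the claim.

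For the induction step I would restrict $f_j$ to $Z_{j-1}$, obtaining a $G$-map $f_j|_{Z_{j-1}} : Z_{j-1}\to E_j$ whose preimage of $P_j$ is exactly $Z_j$. Applying Theorem~\ref{inverse-image-genus} to this restricted map yields
$$
g_G(Z_j) = g_G\bigl((f_j|_{Z_{j-1}})^{-1}(P_j)\bigr) \ge g_G(Z_{j-1}) - g_G(E_j\setminus P_j),
$$
and combining with the inductive hypothesis for $Z_{j-1}$ closes the step. The closedness of $P_j$ (hence the openness of its complement) guarantees that the two pieces $Z_j$ and $(f_j|_{Z_{j-1}})^{-1}(E_j\setminus P_j)$ covering $Z_{j-1}$ are respectively a closed and an open $G$-invariant subspace, which is exactly the cover hypothesis needed for the subadditivity invoked inside Theorem~\ref{inverse-image-genus}.

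The single point requiring care---and the only genuine obstacle---is that Theorem~\ref{inverse-image-genus} demands a \emph{fixed point free} domain, so I must verify that each intermediate space $Z_{j-1}$ is itself fixed point free before applying the theorem to it. This is immediate but essential: $X$ is fixed point free, meaning that the stabilizer of every point is a proper subgroup of $G$, and this property is inherited by every $G$-invariant subspace, in particular by each $Z_{j-1}\subseteq X$. Hence Theorem~\ref{inverse-image-genus} applies legitimately at every stage, the chain of inequalities telescopes, and the induction yields
$$
g_G\Bigl(\bigcap_{i=1}^r f_i^{-1}(P_i)\Bigr) = g_G(Z_r) \ge g_G(X) - \sum_{i=1}^r g_G(E_i\setminus P_i),
$$
as required.
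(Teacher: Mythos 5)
Your proposal is correct and is exactly the paper's argument: the paper proves this theorem simply by "applying Theorem~\ref{inverse-image-genus} several times," which is precisely your induction on the nested preimages $Z_j$. Your additional check that each $G$-invariant subspace $Z_{j-1}\subseteq X$ inherits fixed point freeness is a worthwhile detail the paper leaves implicit.
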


Now let us give some definitions.

\begin{defn}
Let $X$ be a $G$-space and let $f : X\to Y$ be a continuous map. Define the map $\widehat f : X\to \Map(G, Y)$ by the formula
$$
\widehat f(x)(g) = f(g^{-1}x).
$$
Note that for any $h\in G$
$$
h(\widehat f(x))(g) = \widehat f(x)(h^{-1}g) = f((h^{-1}g)^{-1}x) = f(g^{-1}hx) = \widehat f(hx)(g),
$$
so $h(\widehat f(x)) = \widehat f(hx)$, thus the map $\widehat f$ is $G$-equivariant.
\end{defn}

\begin{lem}
\label{coincidence-redifinition}
The orbit coincidence sets can be defined as follows $(q=|G|)$:
$$
A(f, k) = \widehat f^{-1}(\Delta_q^k(Y))
$$
and
$$
A'(f, k) = \widehat f^{-1}(\Deltad_q^k(\mathbb R)).
$$
\end{lem}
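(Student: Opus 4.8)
The plan is to verify both identities by directly unwinding the definitions on each side and matching the membership conditions point by point, using the identification $Y^q\cong\Map(G,Y)$ (with the index set $[q]$ read as $G$) together with the single structural fact that inversion $g\mapsto g^{-1}$ is a bijection of $G$.

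First I would treat the equation $A(f,k)=\widehat f^{-1}(\Delta_q^k(Y))$. Fix $x\in X$. By definition $\widehat f(x)$ is the tuple $(f(g^{-1}x))_{g\in G}\in\Map(G,Y)$, so $\widehat f(x)\in\Delta_q^k(Y)$ means precisely that there are $k$ distinct elements $h_1,\dots,h_k\in G$ (the distinct indices appearing in the definition of $\Delta_q^k$, now regarded as elements of $G$) with $f(h_1^{-1}x)=\dots=f(h_k^{-1}x)$. Setting $g_j=h_j^{-1}$ turns this into $f(g_1x)=\dots=f(g_kx)$ with $g_1,\dots,g_k$ distinct, which is exactly the condition defining $A(f,k)$. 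Since $g\mapsto g^{-1}$ is a bijection, the substitution is reversible, giving the equality of sets.

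For the second identity $A'(f,k)=\widehat f^{-1}(\Deltad_q^k(\mathbb R))$, with $Y=\mathbb R$, the same substitution handles the coincidence clause $\widehat f(x)(h_1)=\dots=\widehat f(x)(h_k)=c$, and it remains only to match the maximality clause. Membership $\widehat f(x)\in\Deltad_q^k(\mathbb R)$ additionally requires $\widehat f(x)(h)\le c$ for all $h\in G$, i.e. $f(h^{-1}x)\le c$ for every $h$; since $h\mapsto h^{-1}$ ranges over all of $G$, this is the same as $f(gx)\le c$ for all $g\in G$, matching the defining condition of $A'(f,k)$.

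The argument is entirely a bookkeeping exercise, so I expect no substantial obstacle; the only point demanding care is that the inversion $g\mapsto g^{-1}$ is built into the definition of $\widehat f$, so one must consistently pass between the indices $h_j$ used in $\Delta_q^k$ / $\Deltad_q^k$ and the group elements $g_j=h_j^{-1}$ acting on $x$, and likewise observe that the universal quantifier over $G$ in the definition of $A'$ is unaffected by inversion.
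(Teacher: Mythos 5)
Your proof is correct: the paper states this lemma without proof, treating it as an immediate unwinding of the definitions, and your careful verification (including the reversible substitution $g_j = h_j^{-1}$ and the observation that inversion preserves the universal quantifier in the maximality clause) is exactly the routine check the paper leaves implicit. Nothing is missing; this is the same (and only natural) approach.
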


Now we are ready to prove the main result.

\begin{proof}[Proof of Theorem~\ref{coinc-fpf}]
Put $E_1=\mathbb R^q$, $P_1=\Deltad_q^k(\mathbb R)$, $E_2=(\mathbb R^{m-1})^{q}$, $P_2=\Delta_q^q(\mathbb R^{m-1})$. From Theorems~\ref{max-conf-space-genus}, \ref{conf-space-genus}, \ref{inverse-image-intersection-genus}, and Lemma~\ref{coincidence-redifinition} we obtain 
\begin{multline*}
g_G(A'(f_1, k)\cap A(h, q))\ge\\
\ge g_G(X)-g_G(W(q,k))-g_G(V(m-1,q,q))\ge\\ \ge g_G(X)-(m-1)(q-1)-k+1.
\end{multline*}

Consider $f : X\to \mathbb R^m$ as a pair $f=f_1\oplus h$. It is clear that $A(f, k)\supseteq A'(f_1, k)\cap A(h, q)$.
\end{proof}

In fact, the above reasoning prove the following statement.

\begin{thm}
Let $G$ be a finite group, $m\ge 0$. Consider a fixed point free $G$-space $X$, functions $f_i:X\to \mathbb R$ $(i=1,\ldots, d)$, integers $2\le k_i\le q$ $(i=1,\ldots, d)$, and a map $h:X\to \mathbb R^m$. Then
\begin{multline*}
g_G(A(h, q))\cap\bigcap_{i=1}^d A'(f_i, k_i)) \ge \\
\ge g_G(X)-\sum_{i=1}^d g_G(W(q,k_i))-g_G(V(m,q,q))\ge \\
\ge g_G(X)-m(q-1)-\sum_{i=1}^d k_i+d.
\end{multline*}
\end{thm}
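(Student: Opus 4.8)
The plan is to repeat verbatim the argument used for Theorem~\ref{coinc-fpf}, now with $d+1$ equivariant maps instead of two. First I would form, for each $i=1,\dots,d$, the equivariant map $\widehat{f_i}:X\to\Map(G,\mathbb R)\cong\mathbb R^q$, together with the single map $\widehat{h}:X\to\Map(G,\mathbb R^m)\cong(\mathbb R^m)^q$; all of these are $G$-equivariant by the computation preceding Lemma~\ref{coincidence-redifinition}. By that lemma the relevant coincidence sets are the preimages $A'(f_i,k_i)=\widehat{f_i}^{-1}(\Deltad_q^{k_i}(\mathbb R))$ and $A(h,q)=\widehat{h}^{-1}(\Delta_q^q(\mathbb R^m))$, so the set whose genus we wish to bound is exactly the intersection of these $d+1$ preimages.

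Next I would check that Theorem~\ref{inverse-image-intersection-genus} applies, taking $E_i=\mathbb R^q$, $P_i=\Deltad_q^{k_i}(\mathbb R)$ for $i=1,\dots,d$, and $E_{d+1}=(\mathbb R^m)^q$, $P_{d+1}=\Delta_q^q(\mathbb R^m)$. Each $P_i$ is closed and $G$-invariant: the maximum-diagonal and full-diagonal conditions are preserved under permutation of coordinates, and $W(q,k_i)$ was already exhibited as an open set in the proof of Theorem~\ref{max-conf-space-genus}. The point to verify is that each $P_i$ contains all fixed points of its $E_i$. Under the left $G$-action a fixed map $\phi$ satisfies $\phi(g^{-1}h)=\phi(h)$ for all $g,h$, hence is constant; a constant map in $\Map(G,\mathbb R)$ has all coordinates equal to a common value $c$ with $y_j\le c$ for all $j$, so it lies in every $\Deltad_q^{k_i}(\mathbb R)$, and a constant map in $\Map(G,\mathbb R^m)$ lies in $\Delta_q^q(\mathbb R^m)$. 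With the hypotheses in place, Theorem~\ref{inverse-image-intersection-genus} yields the first inequality, since $E_i\setminus P_i=W(q,k_i)$ and $E_{d+1}\setminus P_{d+1}=V(m,q,q)$.

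Finally I would substitute the numerical bounds. Theorem~\ref{max-conf-space-genus} gives $g_G(W(q,k_i))\le k_i-1$, while $V(m,q,q)$ is $G$-homotopy equivalent to the sphere $I[G]^m\setminus\{0\}$ of dimension $m(q-1)-1$, so the dimension upper bound gives $g_G(V(m,q,q))\le m(q-1)$. Summing over $i$ then produces the second inequality. The only genuinely non-mechanical step is the fixed-point verification of the previous paragraph: it is what licenses the use of Theorem~\ref{inverse-image-intersection-genus}, and it relies crucially on the maximum diagonal $\Deltad_q^{k_i}(\mathbb R)$ already absorbing the constant (diagonal) maps even when $k_i<q$, which is precisely why one may replace all of $\mathbb R^q$ by the smaller complement $W(q,k_i)$ and thereby gain the sharper constant $k_i-1$ in the estimate.
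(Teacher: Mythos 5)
Your proposal is correct and takes essentially the same route as the paper: the paper's own ``proof'' is just the remark that the argument for Theorem~\ref{coinc-fpf} applies verbatim, i.e.\ one applies Lemma~\ref{coincidence-redifinition} and Theorem~\ref{inverse-image-intersection-genus} to the $d+1$ equivariant maps $\widehat{f_i}$, $\widehat h$ with $P_i=\Deltad_q^{k_i}(\mathbb R)$ and $P_{d+1}=\Delta_q^q(\mathbb R^m)$, then inserts the bounds $g_G(W(q,k_i))\le k_i-1$ from Theorem~\ref{max-conf-space-genus} and $g_G(V(m,q,q))\le m(q-1)$ from the dimension upper bound. Your explicit check that the $G$-fixed points of $\Map(G,\mathbb R)$ and $\Map(G,\mathbb R^m)$ are the constant maps, which lie in every maximum diagonal $\Deltad_q^{k_i}(\mathbb R)$ and in $\Delta_q^q(\mathbb R^m)$, is exactly the hypothesis verification the paper leaves implicit.
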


\section{Coincidences of maps from manifolds}
\label{manifolds}

In this section we prove a coincidence theorem for the case in which $X$ is a manifold. In this case the simple estimate $\dim A(f, k) \ge g_G(A(f, k)) - 1$ can be improved by imposing additional assumptions.

\begin{thm}
Let $X$ be an $\Fp$-orientable compact connected $N$-di\-men\-sion\-al manifold and assume $H^l(X, \Fp) = 0$ for all positive $l < (m-1)(q-1)+k-1$. Suppose that a $p$-torus $G$ $(|G|=q)$ acts on $X$ without fixed points and $f: X\to \mathbb R^m$ is a continuous map. Then 
$$
\dim A(f,k)\ge N-(m-1)(q-1)-k+1.
$$
\end{thm}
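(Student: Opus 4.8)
The plan is to combine the genus lower bound already obtained in the previous section with a Poincaré–Alexander–Lefschetz duality argument to convert a genus estimate on a closed subset into a dimension estimate. First I would record that the hypotheses — $X$ is $\Fp$-orientable, compact, connected, of dimension $N$, acyclic over $\Fp$ in positive degrees below $(m-1)(q-1)+k-1$, with a fixed point free $p$-torus action — are exactly those needed to apply the cohomology lower bound for $g_G$ (Property~(4) in Section~\ref{fpf-genus}): it gives $g_G(X)\ge (m-1)(q-1)+k$. Hence Theorem~\ref{coinc-fpf} with the splitting $f=f_1\oplus h$ (take any decomposition $\mathbb R^m=\mathbb R\times\mathbb R^{m-1}$) yields $g_G(A(f,k))\ge g_G(X)-(m-1)(q-1)-k+1\ge 1$, so in particular $A(f,k)$ is nonempty. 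But the crude bound $\dim A(f,k)\ge g_G(A(f,k))-1$ only recovers a constant, not the desired $N-(m-1)(q-1)-k+1$; the point of a manifold hypothesis is to do better.

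The key step is to replace the genus of $A(f,k)$ by a cohomological quantity that duality can see. I would set $c=(m-1)(q-1)+k-1$ and consider the closed $G$-invariant set $A=A(f,k)=\widehat f^{-1}(\Delta_q^k(\mathbb R^m))$, whose complement $X\setminus A$ maps $G$-equivariantly into the configuration-like space $V(m,q,k)$, which has $g_G(V(m,q,k))\le c$ by Theorem~\ref{conf-space-genus}. The strategy is to show that some cohomology class of the classifying space $BG$ pulled back along the orbit map is nonzero when restricted to $A$ in a degree as high as duality allows. Concretely, since $H^{*}(BG,\Fp)$ has classes in every degree and $X$ is acyclic over $\Fp$ through degree $c-1$, the equivariant cohomology $H_G^{*}(X,\Fp)$ agrees with $H^{*}(BG,\Fp)$ in low degrees, and the Euler-class obstruction to mapping $X\setminus A$ into $V(m,q,k)$ produces a nonzero class of degree $c$ in $H_G^{*}(X,\Fp)$ that restricts nontrivially to the equivariant cohomology of $A$. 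By Poincaré–Alexander–Lefschetz duality for the $\Fp$-orientable $N$-manifold $X$, a nonzero restriction of such a class forces $A$ to carry a nontrivial homology class in degree $N-c$, whence $\dim A\ge N-c=N-(m-1)(q-1)-k+1$.

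The main obstacle will be making the duality step precise at the level of the \emph{closed subset} $A$ rather than an open neighborhood, since $A$ can be quite pathological. Here I would lean on the continuity property of Čech (or Alexander–Spanier) cohomology emphasized in the remark at the end of Section~\ref{free-genus}: a class with nonzero restriction to the closed set $A$ already has nonzero restriction to some open neighborhood $U\supseteq A$, and Alexander duality in the manifold $X$ relates $H^{*}(A,\Fp)$ (Čech) to the local homology of $X$ supported near $A$. I would run the whole argument equivariantly, using the Borel construction $A\times_G EG$ and transferring the orientation class of $X$, so that the nonzero degree-$c$ class detected above pairs against the fundamental class to yield a genuinely $(N-c)$-dimensional piece of $A$. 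The delicate point is bookkeeping the shift between the equivariant degree (where $BG$ supplies the classes) and the manifold degree (where duality supplies the dimension), and checking that the orientability hypothesis over $\Fp$ — needed precisely when $p\neq 2$ — is used exactly to guarantee the existence of the fundamental class that makes the duality pairing available.
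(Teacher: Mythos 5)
Your proposal contains a genuine gap at its central step, and it is the step you yourself flag as "delicate bookkeeping": the claim that a nonzero degree-$c$ class in $H_G^*(X,\Fp)$ restricting nontrivially to the equivariant cohomology of $A=A(f,k)$ forces, via duality, a homology class on $A$ in degree $N-c$. This implication is false in the generality you need. Equivariant cohomology does not bound covering dimension: already $H_G^*(\pt,\Fp)=H^*(BG,\Fp)$ is nonzero in all degrees, and the same holds for any orbit $G/H$ with $H\neq 1$ (the action on $A$ is only fixed point free, not free). So a nonzero equivariant class of arbitrarily high degree on $A$ carries no dimensional information, and there is no equivariant Poincar\'e duality available to convert it into one --- the Borel construction $A\times_G EG$ is infinite-dimensional, and $X/G$ is not an $\Fp$-homology manifold when $p$ divides orders of stabilizers. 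There is a secondary gap as well: for $k<q$ the space $V(m,q,k)$ is the complement of a subspace \emph{arrangement}, not a homotopy sphere, so there is no "Euler-class obstruction" to invoke; and in any case $X\setminus A$ genuinely does map equivariantly into $V(m,q,k)$ via $\widehat f$, so whatever obstruction one defines vanishes. Making your idea precise would require the primary obstruction or an ideal-valued index, neither of which your outline develops.

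The paper avoids all of this by keeping the duality argument entirely non-equivariant; equivariance enters exactly once, through the genus. Since $\widehat f$ maps $X\setminus A$ into $V(m,q,k)$, monotonicity and Theorem~\ref{conf-space-genus} give $g_G(X\setminus A)\le g_G(V(m,q,k))\le(m-1)(q-1)+k-1$. The contrapositive of the cohomology lower bound (Property~(4) of $g_G$ in Section~\ref{fpf-genus}), applied to the \emph{complement} rather than to $A$, then produces a nonzero class in ordinary cohomology: $H^j(X\setminus A,\Fp)\neq 0$ for some $0<j<g_G(V(m,q,k))$. Now Alexander--Lefschetz duality in the compact $\Fp$-oriented manifold $X$ gives $H^{N-j}(X,A,\Fp)\cong H_j(X\setminus A,\Fp)\neq 0$; the acyclicity hypothesis (note $j<(m-1)(q-1)+k-1$) together with Poincar\'e duality gives $H^{N-j}(X,\Fp)=0$; and the exact sequence of the pair $(X,A)$ then forces $H^{N-j-1}(A,\Fp)\neq 0$ in ordinary \v Cech cohomology --- which, unlike equivariant cohomology, does bound dimension, yielding $\dim A\ge N-j-1\ge N-(m-1)(q-1)-k+1$. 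Your opening reduction (the bound $g_G(X)\ge(m-1)(q-1)+k$ and nonemptiness of $A$ via Theorem~\ref{coinc-fpf}) is correct but, as you note, insufficient; the inversion you are missing is to apply the genus machinery to $X\setminus A$ and let ordinary duality transport the resulting class to $A$, rather than trying to manufacture an equivariant class on $A$ directly.
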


\begin{proof}
Observe that $\widehat f$ restricts to an equivariant map of $X\setminus A(f, k)$ into $V(m, q, k)$ and that we may assume that $X \setminus  A(f, k)$ is path connected. 

Then $g_G(X \setminus  A(f, k))\le g_G(V(m,q, k))$ from the monotonicity of $g_G$. From the cohomology lower bound for genus, there must be some $j$ , $0 < j < g_G(V(m, q, k))$, such that $H^j(X\setminus A(f,k), \Fp)\not=0$, and hence $H_j(X\setminus A(f, k), \Fp) \not=0$. By Poincar\'e duality we obtain 
$$
H^{N-j}(X, A(f,k), \Fp) \not=0.
$$

From the statement of the theorem $H_j(X) = 0$, and by Poincar\'e duality $H^{N-j}(X) = 0$. Thus by the exact cohomology sequence of the pair $(X, A(f, k))$ we obtain 
$$
H^{N-j-1}(A(f, k)) \not=0.
$$ 
It follows that
$$
\dim A(f, k) \ge N - j - 1 \ge N - g_G(V(m,q, k))\ge N-(m-1)(q-1)-k+1.
$$
\end{proof}

\end{document}